\newcommand{\F}{\mathcal{F}}
\newcommand{\G}{\mathcal{G}}
\renewcommand{\H}{\mathbb{H}}
\newcommand{\N}{\mathbb{N}}
\newcommand{\R}{\mathbb{R}}
\DeclareMathOperator{\var}{Var}
\renewcommand{\epsilon}{\varepsilon}
\newcommand{\set}[1]{\left\{#1\right\}}
\newcommand{\pa}[1]{\left(#1\right)}
\newcommand{\abs}[1]{\left|#1\right|}
\newcommand{\norm}[1]{\left\|#1\right\|}
\newcommand{\expt}[2][]{\mathbb{E}_{#1}\left[#2\right]}
\newtheorem{theorem}{Theorem}[section]
\newtheorem{corollary}[theorem]{Corollary}
\newtheorem{lemma}[theorem]{Lemma}
\theoremstyle{remark}
\newtheorem{remark}[theorem]{Remark}
\numberwithin{equation}{section}
\newenvironment{acknowledgements}{%
  % Rename Abstract to Acknowledgements
  \begin{abstract}
}{%
  \end{abstract}
}
\title[Polyspectra of Euclidean and Spherical RWM]{Fluctuations of Polyspectra in Spherical and Euclidean Random Wave Models}
\author[F. Grotto]{Francesco Grotto}
\address{Università di Pisa, Dipartimento di Matematica, 5 Largo Bruno Pontecorvo, 56127 Pisa, Italia.}
\email{\href{mailto:francesco.grotto at unipi.it}{francesco.grotto at unipi.it}}
\author[L. Maini]{Leonardo Maini}
\address{Université du Luxembourg, Maison du Nombre, 6 Avenue de la Fonte, 4364 Esch-sur-Alzette, Luxembourg.}
\email{\href{mailto:leonardo.maini at uni.lu}{leonardo.maini at uni.lu}}
\author[A. P. Todino]{Anna Paola Todino}
\address{Sapienza Università di Roma, Dipartimento di Scienze Statistiche, Piazzale Aldo Moro 5, 00185 Roma, Italia.}
\email{\href{mailto:annapaola.todino@uniroma1.it}{annapaola.todino at uniroma1.it}}
\date\today
\begin{document}

\begin{abstract}
    We consider polynomial transforms (polyspectra) of Berry's model --the Euclidean Random Wave model-- and of Random Hyperspherical Harmonics.
    We determine the asymptotic behavior of variance for polyspectra of any order in the high-frequency limit. In particular, we are able to treat polyspectra of any odd order $q\geq 5$, whose asymptotic behavior was left as a conjecture in the case of Random Hyperspherical Harmonics by Marinucci and Wigman (\emph{Comm. Math. Phys.} 2014). 
    To this end, we exploit a relation between the variance of polyspectra and the distribution of uniform random walks on Euclidean space with finitely many steps, which allows us to rely on technical results in the latter context.
\end{abstract}

\maketitle

%%%%%%%%%%%%%%%%%%%%%%%%%%%%%%%%%%%%%%%%%%%%%
\section{Introduction and Main Results}\label{sec:introduction}

A \emph{Random Wave} model (RW) can be defined as a random field on a Riemannian manifold given by a random combination of eigenfunctions of the Laplace-Beltrami operator at a fixed frequency, or within a small frequency bandwidth. The study of such stochastic objects dates back to the works of Berry and Zelditch \cite{Berry1977,Zelditch2009} and it
is now a well-established area of research, in which properties of nodal sets of RWs and the comparison with their deterministic counterpart have garnered significant focus (\emph{cf.} \cite{Wigman2022} for a recent survey).
In homogeneous spaces such as $\R^d$, the hypersphere $S^d$ or the hyperbolic space $\H^d$ (\emph{cf.} respectively \cite{Maini2022,Marinucci2011book,Grotto2023} for specific discussions), RWs also appear naturally in the spectral decomposition of isotropic Gaussian random fields, that is Gaussian fields whose law (thus, its covariance function) is invariant under isometries of the underlying manifold, as detailed in \cite{Cohen2012}.

In this note, we focus on Gaussian RWs on Euclidean space $\R^d$ and on the (hyper)sphere $S^d\subset \R^{d+1}$. Our main result, Theorem \ref{thm:main}, establishes asymptotics for (variances of) polynomial transforms of those RWs. In particular, we complete the high-frequency asymptotic description of the so-called \emph{polyspectra}, that is integrals of Hermite polynomials of the RWs on fixed space domains:
the case of Hermite polynomials of large odd order was left as a conjecture on hyperspheres in \cite{Marinucci2014,Marinucci2015} and was not discussed in works (such as \cite{Nourdin2019,Maini2022}) in the Euclidean case (see Remark \ref{rmk:biblio} for a precise comparison with previous results). Polyspectra constitute the elementary objects in the Wiener chaos decomposition of functionals of Gaussian RWs, therefore their asymptotics play a crucial role in the Wiener chaos approach to limit theorems.

In order to prove Theorem \ref{thm:main} we will exploit a peculiar relation with uniform random walks on Euclidean space, and the well-established theory of those random processes. To some extent, this resembles a connection with short random walks considered in the case of random spherical harmonics in \cite{Marinucci2008,Marinucci2010}.

\subsection*{Notation}
    We regard the $d$-dimensional sphere $S^d=\{x\in \R^{d+1}:|x|=1\}$ as an embedded manifold of codimension 1; we denote by $\sigma_d$ the induced volume measure and by
    $\omega_d=\sigma_d(S^d)=2 \pi^{(d+1)/2}/\Gamma\pa{\frac{d+1}{2}}$ the total volume.
    The geodesic distance on $S^d$ is given by $d(x,y)=\cos^{-1}(x\cdot y)$, $x\cdot y$ denoting the Euclidean scalar product in $\R^{d+1}$.

    We denote by $B^E_d(x,R)\subset \R^d$ the ball of radius $R$ in $\R^d$ centered at the point $x$. When the context allows it, we lighten the notation omitting dependencies, writing for instance $B_R$ to denote a ball of radius $R$ whose center can be chosen arbitrarily. The same applies to geodesic balls $B^S_d(x,R)\subset S^d$. The symbol $\chi_A$ denotes the indicator function of a set $A$.

    The symbol $C$ denotes a \emph{positive} constant, possibly differing in any of its occurrences, and depending only on eventual subscripts, as in $C_{a,b}$. Landau's $O$ and $o$ symbols have their usual meaning, with constants involved in upper and lower bounds depending again only on eventual subscripts, as in $O_{a,b}(1)$.

\subsection{Random Wave Models}
\emph{Berry's model}, that is the RW model on Euclidean space $\R^d$,
is the centered Gaussian random field $U_\lambda(x)$, $\lambda>0$, $x\in \R^d$, with covariance function
\begin{equation}\label{eq:covberry}
    \expt{U_\lambda(x)U_\lambda(y)}=
    j_d(\lambda|x-y|),\quad
    x,y\in \R^d,
\end{equation}
where $j_d$ is the Fourier transform of the volume measure $\sigma_{d-1}$ of $S^{d-1}$, regarded as a generalized function on the ambient space $\R^d$,
\begin{equation}\label{eq:jd}
     j_d(|x|) =  \frac1{\omega_{d-1}}\int_{S^{d-1}} e^{i x\cdot \theta} d\sigma_{d-1}(\theta)=
     \frac{\nu!2^\nu}{(|x-y|)^\nu}J_\nu(|x-y|), \quad \nu=\frac{d}{2}-1,
\end{equation}
$J_\nu$ being the Bessel function of first kind and order $\nu$. 
In particular, $j_2(r)=J_0(r)$ and $j_3(r)=\text{sinc}(r)=\frac{\sin(r)}{r}$.

The Gaussian field $U_\lambda$ is named after M. Berry, who introduced it in order to describe local behavior of high frequency eigenstates in quantum billiards \cite{Berry1977,Berry2002a}.
It arises as a central limit of random linear combinations of planar waves on $\R^d$ of the form
\begin{equation*}
    \frac{1}{\sqrt N}\sum_{i=1}^N \cos\pa{\lambda x\cdot y_i+\phi_i}, \quad x\in \R^d,\quad N\to\infty,
\end{equation*}
with the $y_i$'s and $\phi_i$'s being i.i.d. uniform random variables respectively in $S^{d-1}$ and $[0,\pi]$ (\emph{cf.} \cite{Berry2002a,Nourdin2019}). Samples of $U_\lambda$ are themselves smooth $\lambda^2$-eigenfunctions of the Laplace operator.

\emph{Random (Hyper)Spherical Harmonics}, that is the RW model on the sphere $S^d$, consist in the centered Gaussian random field $T_\ell(x)$, $\ell\in \N$, $x\in S^d$, with covariance 
\begin{equation}\label{eq:covrsh}
    \expt{T_\ell(x)T_\ell(y)}=\G_{d,\ell}(\cos d(x,y))
    =\binom{\ell+\nu}{\ell}^{-1}P^{\nu,\nu}_\ell(\cos d(x,y))
    \quad x,y \in S^d,
\end{equation}
where $\G_{d,\ell}$ is
the normalized Gegenbauer polynomial of degree $\ell $ (see \cite[4.7]{Szego1939}), the right-hand side providing an alternative representation in terms of Jacobi polynomials.
This random field can be regarded as the analog of Berry's model on $\R^d$. Indeed, the role of Euclidean planar waves is played on the sphere $S^d$ by hyperspherical harmonics, that is Laplace-Beltrami eigenfunctions
\begin{equation*}
    \set{Y_{\ell,m,d}}_{m=1,\dots,\eta_{\ell,d}}\subset C^\infty(S^d),\, \ell\in\N,\quad 
    \Delta_{S^d} Y_{\ell,m,d}+\ell(\ell+d-1) Y_{\ell,m,d}=0,
\end{equation*}
forming a complete orthonormal basis of $L^2(S^d,\sigma_d)$, spanning at each fixed $\ell\in\N$ a distinct eigenspace\footnote{
In fact, the arbitrary choice of an orthogonal basis of the eigenspace relative to $\ell(\ell+d-1)$ is irrelevant in our scope.
}  of dimension $\eta_{\ell,d}=\frac{2\ell+d-1}{\ell} \binom{\ell+d-2}{\ell-1}$.
%\begin{equation*}
%    \eta_{\ell,d}=\frac{2\ell+d-1}{\ell} \binom{\ell+d-2}{\ell-1} \sim \frac{2 \ell^{d-1}}{(d-1)!}, \quad \ell\to \infty.
%\end{equation*}
The Gaussian field $T_\ell$ has the law of a random superposition of waves at a fixed wavenumber:
\begin{equation*}
    T_\ell(x)\sim \sum_{m=1}^{\eta_{\ell,d}} a_{\ell,m,d} Y_{\ell,m,d}(x), \quad x \in S^d,
\end{equation*}
with $\{a_{\ell,m,d}\}_{m=1}^{\eta_{\ell,d}}$ being i.i.d. Gaussian variables $N\pa{0, \omega_d/\eta_{\ell,d}}$.
The index $\ell$ parametrizes the spectrum of Laplace-Beltrami operator on $S^d$ through $\ell(\ell+d-1)$, thus in the limit $\ell\to\infty$ it plays the same asymptotic role of $\lambda\to \infty$ in the Euclidean setting, where $\lambda^2$ is the eigenvalue of the wave $x\mapsto e^{i\lambda x\cdot \theta}$.

\subsection{Polyspectra}
Relevant geometric functionals of $U_\lambda$, such as the $(d-1)$-dimensional volume of the nodal set $\set{U_\lambda=0}$, depend in general also on derivatives (\emph{i.e.} the gradient) of the random field. However, already in the case of integral functionals of the form
\begin{equation}\label{eq:intfunc}
    \F_\lambda=\int_{D} F(U_\lambda(x))dx,\quad F:\R\to\R,\,\lambda>0,\,D\subset\R^d,
\end{equation}
(and the analog for spherical RWs on measurable subsets $D\subseteq S^d$)
the description of the asymptotic behavior as $\lambda\to\infty$ is not trivial, essentially because covariance functions do not satisfy integrability conditions allowing direct applications of basic results (\emph{e.g.} \cite{Breuer1983}), and they oscillate between positive and negative values.

\begin{remark}
    The covariance function \eqref{eq:covberry} depends on $\lambda|x-y|$, therefore the high-frequency limit for functionals of the form \eqref{eq:intfunc} is equivalent to a (fixed-frequency) large domain limit. Indeed, the random fields $B_\lambda(\cdot)\sim B_{1}(\lambda \cdot)$ have the same law, and thus integral functionals
    \begin{equation*}
        \int_{D} F(U_\lambda(x))dx\sim \frac{1}{\lambda^d}\int_{\lambda D} F(U_1(x))dx,
    \end{equation*}
    are also equidistributed.
    Of course, no large-domain limit can be considered on $S^{d}$.
\end{remark}

Since the RWs we consider are Gaussian random fields, functionals of the form \eqref{eq:intfunc} can be studied considering their Wiener chaos expansion.
Recalling that Hermite polynomials, $H_n(t)=(-1)^n \phi^{(n)}(t)/\phi(t)$, $n\in\N$,
%\begin{equation*}
%    H_n(t)=(-1)^n\frac{\phi^{(n)}(t)}{\phi(t)},\,n\in\N,
%    \quad H_0(t)=1,\quad H_1(t)=t,\quad H_2(t)=t^2-1,
%\end{equation*}
form a orthogonal basis of $(\R,\phi(t)dt)$, $\phi$ being the p.d.f. of the standard Gaussian variable, if $F\in L^2(\R; \phi(x)dx)$ the Wiener chaos decomposition of $\F_\lambda$ is given by
\begin{equation*}
    \F_\lambda=\sum_{q=0}^\infty \frac1{q!}\pa{\int_\R F(t)H_q(t)\phi(t)dt} \pa{\int_D H_q(U_\lambda(x))dx}.
\end{equation*}
The stochastic terms $\int_D H_q(U_\lambda(x))dx$ in the decomposition are called \emph{polyspectra}:
Fourth Moment Theorems --by now a standard tool in Gaussian analysis, \emph{cf.} \cite{Nourdin2012}-- allow to deduce Central Limit Theorems for single polyspectra and functionals of the above form as $\lambda\to\infty$,
provided that asymptotics of variance and fourth cumulants are available.

\subsection{Variance Asymptotics}
For $d,q\geq 2$, $R>0$, we will write:
\begin{gather*}
    V^{E}_{d,R}(q,\lambda)=
        \var\pa{\int_{B^E(x_0,R)} H_q(U_{\lambda}(x))dx},
        \quad \lambda>0,\\
    V^{S}_{d,R}(q,\ell)=
        \var\pa{\int_{B^S(x_0,R)} H_q(T_{\ell}(x))d\sigma_d (x)},
        \quad \ell\in\N.
\end{gather*}

\begin{remark}
    The case $q=0$ needs no discussion. As for $q=1$, it turns out that both $V^{E}_{d,R}(1,\lambda)$ and $V^{S}_{d,R}(1,\ell)$ have an oscillatory behavior as $\lambda,\ell\to\infty$ and they can vanish (see respectively \cite{Maini2022,Todino2019} in the two geometrical settings). Anyways, when $q=1$ the polyspectrum is a Gaussian variable, and the study of its variance is simpler, so we will omit that case in our discussion.
\end{remark}

Our main result is the following:

\begin{theorem}\label{thm:main}
    Let $d,q \geq 2$, $R>0$. There exist finite \textbf{positive} constants $c^d_q\in (0,\infty)$ such that:
    \begin{itemize}
        \item (Euclidean) as $\lambda\to\infty$,
        \begin{equation*}
            V^{E}_{d,R}(q,\lambda)=c^d_q q! \omega_d \omega_{d-1} R^d (1+o(1))\cdot
        \begin{cases}
            \lambda^{1-d} &q=2\\
            \lambda^{-2}\log(\lambda) &q=4,d=2\\
            \lambda^{-d} &\text{all other }d\geq2, q\geq 3
        \end{cases},
        \end{equation*}
        \item (Hyperspherical) as $\ell\to\infty$, if $R\in (0,\pi)$,
        \begin{equation*}
            V^{S}_{d,R}(q,\ell)= c^d_q q! \omega_{d-1} \sigma_d(B_R^S)
            (1+o(1))\cdot
        \begin{cases}
            \ell^{1-d} &q=2,\\
            \ell^{-2}\log(\ell) &q=4,d=2,\\
            \ell^{-d} &\text{all other }d\geq2, q\geq 3,
        \end{cases}
        \end{equation*}
        and when $R=\pi$ (that is in the case of polyspectra obtained integrating over the whole $S^d$),
        \begin{equation*}
            V^{S}_{d,\pi}(q,\ell)= 2c^d_q q! \omega_{d-1} \omega_d
            (1+o(1))\cdot
        \begin{cases}
            \ell^{1-d} &q=2,\\
            \ell^{-2}\log(\ell) &q=4,d=2,\\
            0 &q,\ell \text{ both odd,}\\
            \ell^{-d} &\text{all other }d\geq2, q\geq 3.
        \end{cases}
        \end{equation*}
    \end{itemize}
\end{theorem}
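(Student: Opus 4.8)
The plan is to reduce both variances to an integral of the $q$-th power of the covariance by orthogonality of Hermite polynomials, and then to recognise the resulting radial integral as the return density of a uniform random walk. The starting point is the identity $\expt{H_q(X)H_q(Y)}=q!\,\rho^q$ for jointly standard Gaussian $X,Y$ of correlation $\rho$. Since $U_\lambda$ and $T_\ell$ have unit variance ($j_d(0)=\G_{d,\ell}(1)=1$), this gives
\[
V^{E}_{d,R}(q,\lambda)=q!\int_{B^E(x_0,R)}\int_{B^E(x_0,R)} j_d(\lambda|x-y|)^q\,dx\,dy,
\]
and the analogous formula for $V^{S}_{d,R}(q,\ell)$ with $j_d(\lambda|x-y|)$ replaced by $\G_{d,\ell}(\cos d(x,y))$. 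The key observation is that, by \eqref{eq:jd}, $j_d(|x|)$ is exactly the characteristic function of a vector uniform on $S^{d-1}$; hence $j_d^q$ is the characteristic function of $S_q=\theta_1+\dots+\theta_q$, a uniform $q$-step random walk in $\R^d$. By Fourier inversion the constant
\[
c^d_q\ \propto\ \int_{\R^d} j_d(|w|)^q\,dw=(2\pi)^d\,p_{S_q}(0)
\]
is (up to normalisation) the density of $S_q$ at the origin, which is the crux of the whole argument.

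For the Euclidean case I would pass to the difference variable $u=x-y$, rewriting the double integral as $\int_{\R^d} j_d(\lambda|u|)^q\,\mu_R(u)\,du$ with $\mu_R(u)=\abs{B_R\cap(B_R+u)}$ the overlap volume and $\mu_R(0)=\abs{B_R}$. Rescaling $w=\lambda u$ and using that $j_d^q$ concentrates near the origin at scale $\lambda^{-1}$ (so $\mu_R(u)\to\mu_R(0)$ on the relevant scale), the leading term is $q!\,\abs{B_R}\,\lambda^{-d}\int_{\R^d} j_d(|w|)^q\,dw$; this produces the $\lambda^{-d}$ rate and pins down $c^d_q$. The decay $j_d(r)\sim C r^{-(d-1)/2}\cos(r-\phi_d)$ shows that $\int_0^\infty j_d(r)^q r^{d-1}\,dr$ converges exactly outside the borderline cases: for $q=2$ the integrand is $\sim r^0\cos^2$, so the integral truncated at $\lambda R$ grows like $\lambda R$ and yields the rate $\lambda^{1-d}$; for $q=4,\,d=2$ it is $\sim r^{-1}\cos^4$, producing the logarithmic factor. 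These two cases require a separate, more delicate extraction of the leading asymptotics of the truncated integral.

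For the spherical case I would localise using Hilb-type asymptotics for Gegenbauer polynomials: for geodesic distance $\phi=d(x,y)$ of order $\ell^{-1}$ one has $\G_{d,\ell}(\cos\phi)\approx(\phi/\sin\phi)^{(d-1)/2} j_d(L\phi)$ with $L=\ell+\tfrac{d-1}{2}$, so near the diagonal the spherical covariance is a perturbation of Berry's covariance at frequency $\ell$. Integrating first in $y$ over $B^S_R$ and invoking isotropy, the bulk contribution reproduces $q!\,\sigma_d(B^S_R)\,\omega_{d-1}\,\ell^{-d}\int_0^\infty j_d(r)^q r^{d-1}\,dr$, i.e.\ the \emph{same} constant $c^d_q$, while the boundary layer near $\partial B^S_R$ is of lower order. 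When $R=\pi$ one integrates over the whole sphere and must also add the contribution of $y$ near the antipode $-x$, where $\G_{d,\ell}(\cos\phi)=(-1)^\ell\G_{d,\ell}(\cos(\pi-\phi))$; this region contributes the diagonal value times $(-1)^{q\ell}$, giving the factor $1+(-1)^{q\ell}$, that is $2$ when $q\ell$ is even and $0$ when $q,\ell$ are both odd. The vanishing can in fact be seen exactly: since $T_\ell(-x)=(-1)^\ell T_\ell(x)$ pathwise and $H_q(-t)=(-1)^q H_q(t)$, the antipodal change of variables forces $\int_{S^d}H_q(T_\ell)\,d\sigma_d$ to be identically zero when $q\ell$ is odd.

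The main obstacle is the rigorous analysis of the radial integral for \emph{odd} $q$. There $\cos^q$ has zero mean, so $j_d(r)^q r^{d-1}$ is genuinely oscillatory and, in low dimension, only conditionally integrable; moreover $j_d^q$ is not sign-definite, so positivity of $c^d_q$ cannot be read off the integral. Here the random-walk interpretation is decisive: $c^d_q$ equals, up to normalisation, the return density $p_{S_q}(0)$ of the uniform $q$-step walk, which is manifestly nonnegative and, being the value of a continuous density at an interior point of its support, strictly positive. For odd $q\ge5$ one moreover has $(d-1)(q-2)>2$ for all $d\ge2$, so $j_d^q\in L^1(\R^d)$, the characteristic function is integrable, and the inversion formula holds classically; this is precisely what establishes finiteness and positivity of $c^d_q$ and settles the Marinucci--Wigman conjecture for large odd orders. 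I expect the remaining effort to lie in (a) quantitative, uniform control of the error terms in the localisation and in Hilb's approximation, and (b) the careful asymptotic treatment of the two borderline cases $q=2$ and $(q,d)=(4,2)$.
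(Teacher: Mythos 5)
Your overall strategy coincides with the paper's: the Hermite orthogonality identity, the identification of the limiting constant with the density of the uniform $q$-step Pearson walk via Fourier inversion of $j_d=\F[\sigma_{d-1}]$, the separate treatment of the borderline cases $q=2$ and $(q,d)=(4,2)$ via the Bessel asymptotics, Hilb's formula to transfer the computation to the sphere, and the antipodal parity $\G_{d,\ell}(-t)=(-1)^\ell\G_{d,\ell}(t)$ for $R=\pi$. However, there is a genuine gap at the single most important point, namely the strict positivity of $c^d_q$ for odd $q\geq 5$ (the Marinucci--Wigman conjecture that the paper resolves). You argue that $c^d_q\propto p_{S_q}(0)$ is positive because it is ``the value of a continuous density at an interior point of its support.'' That principle is false: a continuous density can vanish at interior points of its support (e.g.\ $f(x)\propto |x|^2e^{-|x|^2}$ on $\R^d$). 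Nonnegativity of $c^d_q$ does come for free from the random-walk interpretation, but strict positivity does not; note that conditioning $S_q$ to land near $0$ only shows the density is positive \emph{somewhere} in every neighbourhood of the origin, not at the origin itself. The paper closes this gap by an actual induction: writing $p_{S_q}(0)=\frac{1}{\omega_{d-1}}\int_{S^{d-1}}p_{S_{q-1}}(\theta)\,d\sigma_{d-1}(\theta)$ reduces positivity at the origin to positivity of the $(q-1)$-step density on the unit sphere, and the latter is obtained from the explicit two-step density together with the Borwein--Straub--Vignat recursion formula
\begin{equation*}
    \psi^d_n(r)=\frac{(\nu!)^2 4^\nu}{\pi(2\nu)!}\int_{-1}^{1}\psi^d_{n-1}\!\left(\sqrt{1+2sr+r^2}\right)(1-s^2)^{\nu-1/2}\,ds,
\end{equation*}
which propagates strict positivity of $\psi^d_n$ on all of $(0,n)$. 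Without this (or an equivalent) argument your proof establishes only $c^d_q\geq 0$, i.e.\ an upper bound on the variance, for odd $q\geq 5$.

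Two secondary points. First, for $q=3$ in dimensions $d=2,3$ the integral $\int_0^\infty j_d(t)^qt^{d-1}dt$ is only conditionally convergent; you flag this but do not resolve it, whereas the paper handles these cases by explicit computation (Dirichlet-type integrals) combined with the Krasikov approximation. Second, your Euclidean localisation step (``$j_d^q$ concentrates at scale $\lambda^{-1}$, so $\mu_R(u)\to\mu_R(0)$'') needs to be made rigorous precisely because $j_d^q$ is oscillatory and not absolutely integrable in the low-$q$ cases; the paper does this by integrating by parts against the overlap function $W_{d,R}$ and applying dominated convergence to $\int_0^{s\lambda}j_d(r)^qr^{d-1}dr$, which is the cleanest way to justify the exchange of limits uniformly in all cases.
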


The proof of Theorem \ref{thm:main} is the content of the forthcoming Section \ref{sec:polyspectra}.

\begin{remark}\label{rmk:biblio}
    Let us clarify the relation of Theorem \ref{thm:main} with the previous literature.
    \begin{itemize}
        \item (Euclidean case) \cite{Nourdin2019} proved the result in dimension $d=2$ for even $q\geq 2$ and upper bounds for odd $q\geq 3$, for integration domains $D\subset \R^2$ with boundary of class $C^1$; 
        \cite{Notarnicola2021} generalizes the discussion to arbitrary $d\geq 2$ and even $q\geq 2$;
        \item (case of $S^2$) \cite{Marinucci_2011,Marinucci2014} proved the above asymptotics on the whole $S^2$ (case $R=\pi$) leaving as an open problem the positivity of $c^2_q$ for odd $q\geq 5$; \cite{Todino2019} discussed the case $R<\pi$ for $q=2$ and provided upper bounds for larger $q\geq 3$;
        \item (Hyperspherical case) concerning polyspectra over the whole $S^d$ ($R=\pi$), in larger dimension the above asymptotic was established for all even $q\geq 2$ in \cite{Marinucci2015} and for $q=3$ \cite{Rossi2019}, leaving positivity of $c^d_q$ for odd $q\geq 5$ as a conjecture (see \cite[p. 2386]{Marinucci2015}).
    \end{itemize}
    In the Euclidean case, an extension to more general integration domains (convex $D\subset \R^2$ with finite perimeter) for $d\geq 2$ and even $q\geq 2$ will appear in \cite{Maini2023}, treating a large class of Gaussian random fields obtained as mixtures of Berry's model at different wavenumbers $\lambda$.
\end{remark}

The remainder of this note is organized as follows. Section
\ref{sec:numerelli} details the relation between variance of polyspectra and the density of uniform random walk, and introduces results on the latter that we can exploit to control the former. We prove Theorem \ref{thm:main} in Section \ref{sec:polyspectra}.

%%%%%%%%%%%%%%%%%%%%%%%%%%%%%%%%%%%%%%%%%%%%%%%%%%%%%%%%%%%%%
\section{Pearson's Random Walk}\label{sec:numerelli}

Let $X_n^d$, $n=1,2,\dots$ denote the uniform random walk on $\R^d$,
that is 
\begin{equation*}
    X_n^d=U_1^d+\dots+U_n^d,
\end{equation*}
with $(U_k^d)_{k\in \N}$ a sequence of i.i.d. uniform random variables on $S^{d-1}\subset \R^d$. The stochastic process $X^d_n$ is also known as \emph{Pearson's Random Walk}, because of a very early mention in \cite{Pearson1905}.

Much is known on Pearson's random walk: in particular, its distribution and moments for a small number of steps $n$ are of interest in Number Theory because of their relation with Mahler measures. We can thus rely on the series of works \cite{Borwein2011,Borwein2012,Borwein2013,Borwein2016}, to which we refer for a comprehensive background on the matter and the related literature.

The basic result on the distribution of $X_n$ for small $n$ is
a representation formula for the p.d.f. 
dating back to the seminal work \cite{Kluyver1906}.
%(see also \cite{Kiefer1984} and the monograph \cite{Watson1995} for the relation with the theory of Bessel functions):

\begin{lemma}[Kluyver's Formula]\label{lem:kluyver}
    For $n\geq 2$ the law $\mu^d_n$ of $X_n^d$ is absolutely continuous with respect to Lebesgue's measure on $\R^d$, and its density is a radial function. 
    The (Euclidean) distance from the origin $\norm{X_n^d}$ is absolutely continuous with respect to Lebesgue's measure on $(0,\infty)$, and
    its density is given by
    \begin{equation*}
        \rho^d_n(r)=\frac1{(\nu!)^2 4^\nu} \int_0^\infty (t r)^{2\nu+1}j_d(t r) j_d(t)^n dt, \quad r>0.
    \end{equation*}
\end{lemma}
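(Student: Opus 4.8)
The plan is to compute the characteristic function of $X_n^d$, recognize it as a radial function, and recover the density by the radial (Hankel) form of Fourier inversion; absolute continuity is established separately so that the resulting formula legitimately identifies a density.

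First I would record the characteristic function. Since the steps $U_k^d$ are i.i.d.\ and uniform on $S^{d-1}$, definition \eqref{eq:jd} gives $\expt{e^{i\xi\cdot U_k^d}}=j_d(\abs{\xi})$ for every $\xi\in\R^d$, a radial function. By independence,
\[
    \expt{e^{i\xi\cdot X_n^d}}=j_d(\abs{\xi})^n,
\]
which is again radial. Next I would settle absolute continuity and radiality of $\mu_n^d$ directly. For $n=2$ the map $(\theta_1,\theta_2)\mapsto\theta_1+\theta_2$ from $S^{d-1}\times S^{d-1}$ to $\R^d$ is smooth with domain of dimension $2(d-1)\geq d$ (this is where $d\geq2$ enters) and, since its differential has image $\theta_1^\perp+\theta_2^\perp=\R^d$ away from $\{\theta_1=\pm\theta_2\}$, it is a submersion off a lower-dimensional set; hence the push-forward of the product of the uniform measures, i.e.\ the law of $X_2^d$, is absolutely continuous. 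For $n\geq3$, $X_n^d=X_2^d+U_3^d+\dots+U_n^d$ is an independent sum, so its law is the convolution of an absolutely continuous measure with a probability measure and is therefore absolutely continuous as well. Rotation invariance of each step forces the law of $X_n^d$, hence its density $f_n(x)=F_n(\abs{x})$, to be radial.

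With a radial density in hand, the formula follows from the radial form of Fourier inversion. For a radial function on $\R^d$ the Fourier transform reduces to a Hankel transform of order $\nu=d/2-1$; applied to $\expt{e^{i\xi\cdot X_n^d}}=j_d(\abs{\xi})^n$ this yields
\[
    F_n(r)=(2\pi)^{-d/2}\,r^{-\nu}\int_0^\infty j_d(t)^n\, J_\nu(rt)\, t^{\nu+1}\,dt.
\]
The density of the norm is then obtained by integrating over the sphere of radius $r$, namely $\rho_n^d(r)=\omega_{d-1}\,r^{d-1}F_n(r)$. Substituting $J_\nu(rt)=(rt)^\nu j_d(rt)/(\nu!\,2^\nu)$ from \eqref{eq:jd}, using $d-1-\nu=\nu+1$ together with $2\nu+1=d-1$, and inserting $\omega_{d-1}=2\pi^{d/2}/\Gamma(d/2)=2\pi^{\nu+1}/\nu!$ (since $\Gamma(d/2)=\nu!$), the powers of $\pi$ cancel and the remaining powers of $2$ combine, collapsing the prefactor to $1/((\nu!)^2 4^\nu)$ and producing exactly
\[
    \rho_n^d(r)=\frac{1}{(\nu!)^2 4^\nu}\int_0^\infty (tr)^{2\nu+1}\, j_d(tr)\, j_d(t)^n\,dt.
\]

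The main obstacle is the convergence of these integrals. Because $j_d(t)=O\pa{t^{-(d-1)/2}}$ as $t\to\infty$ (from the standard asymptotics of $J_\nu$), the characteristic function $j_d^n$ is in general not integrable on $\R^d$ for small $n$, so the inversion integral is only conditionally convergent and pointwise inversion is not automatic. I would handle this by regularizing, e.g.\ inserting a factor $e^{-\epsilon t^2}$, carrying out the now absolutely convergent radial inversion, and letting $\epsilon\downarrow0$: the limit can be taken at points where $F_n$ is continuous, and the conditional convergence of the limiting integral is controlled using the oscillation and decay of $J_\nu$ and $j_d$. This regularization is the delicate part of the argument; the rest is bookkeeping of the Bessel normalizations and constants.
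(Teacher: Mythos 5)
Your argument is correct in outline, and it is worth noting that the paper does not prove this lemma at all: it is quoted from \cite[Theorem 2.1]{Borwein2016}, and the route taken there is essentially the one you propose (characteristic function $j_d(\abs{\xi})^n$ by independence, radial Fourier--Hankel inversion, then multiplication by $\omega_{d-1}r^{d-1}$ to pass from the density of $X_n^d$ to that of $\norm{X_n^d}$). Your constant bookkeeping checks out: with $\hat\mu_n^d(\xi)=j_d(\abs{\xi})^n$ one gets $F_n(r)=\omega_{d-1}(2\pi)^{-d}\int_0^\infty j_d(tr)j_d(t)^n t^{d-1}dt$, and $\omega_{d-1}^2(2\pi)^{-d}=1/((\nu!)^2 4^\nu)$ together with $d-1=2\nu+1$ gives exactly the stated formula. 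The submersion argument for absolute continuity of $\mu_2^d$ when $d\geq 2$ and the convolution argument for $n\geq 3$ are both sound.

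The one place where your proposal is a sketch rather than a proof is the final regularization step, and this is precisely where the real content lies: since $j_d(t)^n t^{d-1}\sim t^{(d-1)(1-n/2)}$ is not integrable for small $n$, the inversion integral is only an improper (conditionally convergent) one, and --- as the paper stresses immediately after the lemma and in Remark \ref{rmk:singolarita} --- it can genuinely diverge to $+\infty$ at isolated radii (e.g.\ $\rho_2^2$ at $r=2$). So the claim ``the limit can be taken at points where $F_n$ is continuous'' must be supplemented by (i) an a.e.\ identification of the Gauss means $\mu_n^d\ast G_\epsilon$ with the density, and (ii) an Abelian argument showing that whenever the improper integral $\lim_{T\to\infty}\int_0^T$ exists, the $\epsilon$-regularized integrals converge to it; establishing (ii), and determining when the improper integral converges, requires the oscillatory asymptotics of $j_d$ (expanding $j_d(tr)j_d(t)^n$ into a sum of terms $t^{-\alpha}\cos(\beta t-\gamma)$ and isolating the non-oscillating resonant term, which is what produces the divergences). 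You have named the right tool, but as written this step is asserted rather than carried out; if you intend a self-contained proof rather than a citation, this is the part to write in full.
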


To be precise, the integral in Kluyver's formula has to be regarded as an improper integral that either converges to a non-negative number (for large enough $d,n$ even absolutely) or diverges to $+\infty$ (see Remark \ref{rmk:singolarita} below).
We refer to \cite[Theorem 2.1]{Borwein2016} for a proof of Lemma \ref{lem:kluyver} and a thorough general discussion. 

\subsection{Random Walks and Polyspectra}
Before we outline further properties of Pearson's random walk to be used in the last Section, let us explain heuristically how one is led to consider this apparently unrelated object in studying RW's polyspectra. The following computation concerns the Euclidean case only, and has no direct analog in the case of hyperspherical RWs. Nevertheless, in the high-frequency limiting regime the Euclidean and hyperspherical RW models turn out to exhibit the same behavior.
It holds
\begin{equation*}
    V^{E}_{d,R}(q,\lambda)
        =q!\int_{B_R}\int_{B_R}j_d( \lambda |x-y|)^qdxdy,
\end{equation*}
since $j_d( \lambda |x-y|)$ is the covariance function of $U_\lambda$, and if 
$X,Y$ are two centered jointly Gaussian variables $\expt{H_q(X)H_q(Y)}=q!\expt{XY}^q$.
Recalling that the function $j_d=\F[\sigma_{d-1}]$ is the Fourier transform of the volume measure of $S^{d-1}\subset \R^d$ (\emph{cf.} \eqref{eq:jd}), we can write
\begin{align*}
    V^{E}_{d,R}(q,\lambda)
    &=q!\int_{\R^d}\int_{\R^d} \frac{\chi_{B_{\lambda R}}(x)\chi_{B_{\lambda R}}(y)}{\lambda^{2d}} \F[\sigma_{d-1}](x-y)^q dxdy\\
    &=q!\int_{\R^d}\int_{\R^d} \frac{\chi_{B_{\lambda R}}(x)\chi_{x+B_{\lambda R}}(z)}{\lambda^{2d}} \F[\underbrace{\sigma_{d-1}\ast \cdots \ast \sigma_{d-1}}_{q \text{ times}}](z) dxdz,
\end{align*}
where we recognize the $q$-fold convolution of $\sigma_{d-1}$ with itself as the law $\mu^d_q$ of $X^d_q$. As $\lambda\to\infty$, indicator functions in the last displayed equation converge pointwise to $1$ on $\R^d$,
therefore we expect ---at least intuitively--- that the limiting integral
\begin{equation*}
    \int_{\R^d} \F[\mu^d_q](z) dz=
    \F^{-1}\F[\mu^d_q](0)=\frac{d\mu^d_q}{dx}(0)
    =\lim_{r\to 0} \frac{\rho^d_{q}(r)}{\omega_{d-1}r^{d-1}}
\end{equation*}
plays a relevant role in determining the asymptotic behavior of $V^{E}_{d,R}(q,\lambda)$. By Kluyver's formula we can also write
\begin{equation*}
    \lim_{r\to 0} \frac{\rho^d_q(r)}{r^{d-1}}=
    \frac{1}{(\nu!)^2 4^\nu} \int_0^\infty j_d(t)^q t^{d-1} dt=
    \rho^d_{q-1}(1),
\end{equation*}
therefore the crucial objects appear to be the integrals
\begin{equation}\label{eq:cdq}
    I^d_q\coloneqq \int_0^\infty j_d(t)^q t^{d-1}dt=\frac{\rho^d_{q-1}(1)}{(\nu!)^2 4^\nu },
\end{equation}
and these numbers can \emph{a priori} take any value in $[0,\infty]$.

In fact, $I^d_q$ \emph{coincides with the coefficient} $c^d_q$ \emph{appearing in} Theorem \ref{thm:main} in the cases $d\geq 2,q\geq 3$ except $d=2,q=4$.
Before we prove it in the last Section, we need to discuss finiteness and positivity of the $I^d_q$'s, that is the task for which we need to resort to previous results on Pearson's random walk.

\subsection{Density of short random walks}
We report an explicit expression for the densities $\rho^d_2$
%of $\norm{X^d_2}$, that is the random walk of $n=2$ steps in dimension $d$,
and a recursion formula allowing to treat the density at higher $n$.
Representations for $\rho^d_n$, $n=3,4$, in terms of hypergeometric functions are available in \cite[Equations (74),(79)]{Borwein2016}.

\begin{lemma}\cite[Lemma 4.1]{Borwein2016}\label{lem:rho2}
    For $d\geq 2$ and $0<r<2$ it holds
    \begin{equation*}
        \rho^d_2(r)=\frac{2}{\pi\binom{2\nu}{\nu}}r^{2\nu}(4-r^2)^{\nu-1/2}
        \chi_{(0,2)}(r).
    \end{equation*}
\end{lemma}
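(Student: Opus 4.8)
The plan is to compute $\rho^d_2$ directly from the geometric description of $X_2^d=U_1^d+U_2^d$, bypassing Kluyver's formula (Lemma \ref{lem:kluyver}), which for $n=2$ would instead require evaluating a Weber--Schafheitlin integral of three Bessel factors. By isotropy of the law of $X_2^d$, I may condition on $U_1^d=e_1$ and write $\norm{X_2^d}=\norm{e_1+U_2^d}$, a quantity depending only on the polar angle $\Theta\in[0,\pi]$ between $U_2^d$ and $e_1$. Since $\norm{e_1+U_2^d}^2=2+2\cos\Theta$, the relation $r^2=2+2\cos\Theta$ confines $r$ to $(0,2)$ and yields $\cos\Theta=r^2/2-1$, which already explains the support and the factor $(4-r^2)$ to come.

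Next I recall that the polar angle of a point uniformly distributed on $S^{d-1}$ has density proportional to $\sin^{d-2}\theta$ on $[0,\pi]$, with normalizing constant $\int_0^\pi \sin^{d-2}\theta\,d\theta=\sqrt\pi\,\Gamma(\nu+\tfrac12)/\Gamma(\nu+1)$ (a Beta integral, writing $d=2\nu+2$). A change of variables from $\theta$ to $r$ then produces $\rho^d_2$: differentiating $r^2=2+2\cos\theta$ gives $\abs{d\theta/dr}=r/\sin\theta$, while squaring $\cos\theta=r^2/2-1$ gives $\sin^2\theta=r^2(4-r^2)/4$, hence $\sin\theta=r\sqrt{4-r^2}/2$ on $(0,2)$. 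The Jacobian cancels one power of $\sin\theta$, and inserting $\sin^{d-3}\theta=2^{1-2\nu}r^{2\nu-1}(4-r^2)^{\nu-1/2}$ (using $d-3=2\nu-1$) leaves
\[
\rho^d_2(r)=\frac{r\,\sin^{d-3}\theta}{\int_0^\pi\sin^{d-2}\theta\,d\theta}
=\frac{\Gamma(\nu+1)}{2^{2\nu-1}\sqrt\pi\,\Gamma(\nu+\tfrac12)}\,r^{2\nu}(4-r^2)^{\nu-1/2}
\]
on $(0,2)$.

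The only point requiring care is rewriting the prefactor in the stated binomial form. Here I would invoke the Legendre duplication formula $\Gamma(\nu+\tfrac12)\Gamma(\nu+1)=2^{-2\nu}\sqrt\pi\,\Gamma(2\nu+1)$, which gives $\binom{2\nu}{\nu}=\Gamma(2\nu+1)/\Gamma(\nu+1)^2=2^{2\nu}\Gamma(\nu+\tfrac12)/(\sqrt\pi\,\Gamma(\nu+1))$ and hence $\Gamma(\nu+1)/(2^{2\nu-1}\sqrt\pi\,\Gamma(\nu+\tfrac12))=2/(\pi\binom{2\nu}{\nu})$, matching the claim. This Gamma-function bookkeeping is the main (though entirely routine) obstacle; everything else is elementary geometry and a single change of variables. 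As a consistency check, one can verify $\int_0^2\rho^d_2(r)\,dr=1$ via the substitution $r^2=4u$, which reduces the integral to $B(\nu+\tfrac12,\nu+\tfrac12)$ and, upon one more application of the duplication formula, reproduces the same normalizing constant.
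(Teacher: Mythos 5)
Your derivation is correct, and it is worth noting that the paper offers no proof at all for this statement: Lemma~\ref{lem:rho2} is quoted verbatim from \cite[Lemma 4.1]{Borwein2016}, where it is obtained within the Bessel-integral framework surrounding Kluyver's formula. Your argument is therefore a genuinely different, self-contained route: conditioning on $U_1^d=e_1$ by isotropy, using the law of cosines $\norm{e_1+U_2^d}^2=2+2\cos\Theta$, and pushing forward the polar-angle density $\propto\sin^{d-2}\theta$ through the change of variables $r^2=2+2\cos\theta$. All the individual steps check out: the Jacobian $\abs{d\theta/dr}=r/\sin\theta$, the identity $\sin\theta=r\sqrt{4-r^2}/2$ on $(0,2)$, the normalization $\int_0^\pi\sin^{2\nu}\theta\,d\theta=\sqrt{\pi}\,\Gamma(\nu+\tfrac12)/\Gamma(\nu+1)$, and the duplication-formula identity $\binom{2\nu}{\nu}=2^{2\nu}\Gamma(\nu+\tfrac12)/(\sqrt{\pi}\,\Gamma(\nu+1))$ (valid for half-integer $\nu$ as well, which is needed when $d$ is odd); the endpoint case $d=2$, where the prefactor degenerates to $2/\pi$ and the density has an integrable singularity at $r=2$, is also consistent. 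What your approach buys is elementarity and independence from Lemma~\ref{lem:kluyver}, whose improper integral would be delicate to evaluate directly for $n=2$; what the citation buys the authors is uniformity, since \cite{Borwein2016} treats all $\rho^d_n$ in one framework and the paper also needs the recursion of Lemma~\ref{lem:recursion} from the same source. Your closing normalization check via $B(\nu+\tfrac12,\nu+\tfrac12)$ is a nice sanity test but not logically necessary.
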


\begin{lemma}\cite[Theorem 2.9]{Borwein2016}\label{lem:recursion}
    Let $d,n\geq 2$, $r>0$ and define $\psi^d_n(r)=\rho^d_n(r)/r^{d-1}$.
    It holds, for $n\geq 3$,
    \begin{equation}\label{eq:recursion}
        \psi^d_n(r)=\frac{(\nu!)^2 4^\nu}{\pi (2\nu)!}
        \int_ {-1}^1 \psi_{n-1}^d\pa{\sqrt{1+2s r+r^2}}(1-s^2)^{\nu-1/2}ds,
        \quad r\in (0,n).
    \end{equation}
\end{lemma}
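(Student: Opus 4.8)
The plan is to read $\psi^d_n$ as a constant multiple of the Lebesgue density of $X^d_n$ and to derive the recursion directly from the one-step convolution structure of Pearson's walk. First I would record the relation between the radial density $\rho^d_n$ of $\norm{X^d_n}$ and the Lebesgue density of $\mu^d_n$ on $\R^d$: since by Lemma \ref{lem:kluyver} the latter is radial, write it as $g^d_n(\norm{x})$; integrating in polar coordinates over the sphere of radius $r$ gives $\rho^d_n(r)=\omega_{d-1}r^{d-1}g^d_n(r)$, hence $\psi^d_n(r)=\rho^d_n(r)/r^{d-1}=\omega_{d-1}g^d_n(r)$. Thus \eqref{eq:recursion} is equivalent to a recursion for the volume densities $g^d_n$.

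Next I would exploit $X^d_n=X^d_{n-1}+U^d_n$ with $U^d_n$ uniform on $S^{d-1}$ and independent of $X^d_{n-1}$, so that $\mu^d_n=\mu^d_{n-1}\ast(\sigma_{d-1}/\omega_{d-1})$ and therefore, at the level of densities, $g^d_n(x)=\frac1{\omega_{d-1}}\int_{S^{d-1}}g^d_{n-1}(x-\theta)\,d\sigma_{d-1}(\theta)$. By rotational invariance I may take $x=re_1$; parametrizing $\theta\in S^{d-1}$ by its polar angle $\alpha\in[0,\pi]$ from $e_1$ one has $\norm{x-\theta}^2=1+r^2-2r\cos\alpha$ together with the disintegration $d\sigma_{d-1}(\theta)=\omega_{d-2}\sin^{d-2}\alpha\,d\alpha$. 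Substituting $s=-\cos\alpha$ turns $\norm{x-\theta}$ into $\sqrt{1+2sr+r^2}$ and, since $\nu-\tfrac12=\tfrac{d-3}2$, converts the angular measure into $\sin^{d-2}\alpha\,d\alpha=(1-s^2)^{\nu-1/2}ds$. This already yields \eqref{eq:recursion} up to the prefactor $\omega_{d-2}/\omega_{d-1}$.

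It then remains to identify the constant. Using $\omega_{d-1}=2\pi^{d/2}/\Gamma(d/2)$ and $\nu=d/2-1$, a short computation gives $\omega_{d-2}/\omega_{d-1}=\Gamma(\nu+1)/(\sqrt\pi\,\Gamma(\nu+1/2))$; the Legendre duplication formula $\Gamma(\nu+1/2)\Gamma(\nu+1)=2^{-2\nu}\sqrt\pi\,\Gamma(2\nu+1)$ then turns this into $(\nu!)^2 4^\nu/(\pi(2\nu)!)$, matching \eqref{eq:recursion} exactly.

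The hard part is not this algebra but the analytic justification that the convolution identity holds pointwise and that the right-hand integral converges on the stated range $r\in(0,n)$. The measure $\mu^d_{n-1}$ admits a genuine density only for $n-1\geq2$ (Lemma \ref{lem:kluyver}), and that density may blow up at the edge of its support --- already for $n-1=2$, $d=2$, Lemma \ref{lem:rho2} shows $\psi^d_2$ has an (integrable) singularity at $r=2$. I would therefore check, starting from the explicit $\rho^d_2$ and proceeding by induction on $n$, that $\psi^d_{n-1}\pa{\sqrt{1+2sr+r^2}}$ is integrable against the weight $(1-s^2)^{\nu-1/2}$ for every $r\in(0,n)$, so that the smoothing effect of the convolution indeed makes $g^d_n$ continuous and \eqref{eq:recursion} valid as a genuine pointwise identity rather than merely an almost-everywhere one.
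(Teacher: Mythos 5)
The paper offers no proof of this statement at all --- it is imported verbatim from \cite{Borwein2016}, so there is nothing internal to compare against. Your derivation is correct and self-contained: the identification $\psi^d_n(r)=\omega_{d-1}g^d_n(r)$ with the volume density of $\mu^d_n$, the one-step convolution $\mu^d_n=\mu^d_{n-1}\ast(\sigma_{d-1}/\omega_{d-1})$, the polar parametrization turning $d\sigma_{d-1}$ into $\omega_{d-2}(1-s^2)^{\nu-1/2}\,ds$, and the evaluation $\omega_{d-2}/\omega_{d-1}=\Gamma(\nu+1)/(\sqrt{\pi}\,\Gamma(\nu+1/2))=(\nu!)^2 4^\nu/(\pi(2\nu)!)$ via Legendre duplication all check out. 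The cited source instead works on the Fourier side, combining Kluyver's Bessel-integral representation (Lemma \ref{lem:kluyver}) with Gegenbauer's product formula for $j_d$; your probabilistic route is more elementary and more transparent, at the price of having to confront the pointwise-versus-a.e.\ issue you correctly flag at the end.

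One concrete correction to your last paragraph: the plan to show that $\psi^d_{n-1}\pa{\sqrt{1+2sr+r^2}}$ is integrable against $(1-s^2)^{\nu-1/2}ds$ for \emph{every} $r\in(0,n)$, so that $g^d_n$ comes out continuous, cannot succeed as stated. Take $d=2$, $n=3$, $r=1$: then $\sqrt{1+2s+1}=\sqrt{2(1+s)}\to 0$ as $s\to-1$, and by Lemma \ref{lem:rho2} one has $\psi^2_2(u)\sim (\pi u)^{-1}$ near $u=0$, so the integrand behaves like $(1+s)^{-1}(1-s)^{-1/2}$ and the integral diverges to $+\infty$. This is not a contradiction but the correct answer: $\psi^2_3(1)=\rho^2_3(1)=+\infty$ is precisely the singular case recorded in Remark \ref{rmk:singolarita} (equivalently $I^2_4=+\infty$), and Corollary \ref{cor:positivi} deliberately allows $\psi^d_n$ to be infinite. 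So \eqref{eq:recursion} must be read as an identity in $[0,+\infty]$: since all quantities are non-negative, Tonelli gives the convolution identity with values in $[0,+\infty]$ with no integrability hypothesis, and the remaining task is only to upgrade from a.e.\ $r$ to all $r\in(0,n)$ --- e.g.\ by checking that the right-hand side of \eqref{eq:recursion} agrees with the Kluyver representative of the density, or by an induction using the explicit continuity of $\psi^d_2$ off its singular set. With that adjustment your argument is complete.
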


\begin{corollary}\label{cor:positivi}
   For any $d\geq 2$ and $n\geq 3$, $\psi^d_n$ is strictly positive (possibly infinite) on $(0,n)$.
\end{corollary}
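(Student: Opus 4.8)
The plan is to argue by induction on $n$, using Lemma \ref{lem:rho2} as the base case and the recursion of Lemma \ref{lem:recursion} for the inductive step. For the base case, note that since $2\nu=d-2$, Lemma \ref{lem:rho2} gives $\psi^d_2(r)=\rho^d_2(r)/r^{d-1}=\frac{2}{\pi\binom{2\nu}{\nu}}\,r^{-1}(4-r^2)^{\nu-1/2}$ on $(0,2)$, which is manifestly strictly positive (finite for $d\geq 3$, blowing up at $r\to 2^-$ for $d=2$). Assume now, for some $n\geq 3$, that $\psi^d_{n-1}$ is strictly positive on $(0,n-1)$; I would deduce the same for $\psi^d_n$ on $(0,n)$.

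Fix $r\in(0,n)$. In the recursion \eqref{eq:recursion} the prefactor $\frac{(\nu!)^2 4^\nu}{\pi(2\nu)!}$ is positive, the weight $(1-s^2)^{\nu-1/2}$ is strictly positive on the open interval $(-1,1)$ for every $d\geq 2$ (the exponent $\nu-1/2$ is negative only for $d=2$ and vanishes for $d=3$, but the weight stays positive on $(-1,1)$ in all cases), and by the inductive hypothesis the integrand is nonnegative. Since the integral of a nonnegative measurable function is strictly positive (possibly $+\infty$) as soon as the integrand is strictly positive on a set of positive measure, it suffices to exhibit an open subinterval of $(-1,1)$ on which $g(s):=\sqrt{1+2sr+r^2}$ takes values in $(0,n-1)$, where $\psi^d_{n-1}(g(s))>0$.

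To this end I would analyze the elementary function $g$. As $g(s)^2=1+2sr+r^2$ is strictly increasing in $s$ for $r>0$, the map $g$ sends $[-1,1]$ monotonically onto $[\,|r-1|,\,r+1\,]$, and $g(s)>0$ for every $s\in(-1,1)$ because there $1+2sr+r^2>(1-r)^2\geq 0$. The condition $g(s)<n-1$ is equivalent to $s<s^\ast$ with $s^\ast=\frac{(n-1)^2-1-r^2}{2r}$, and a direct manipulation shows that $s^\ast>-1$ is equivalent to $(n-1)^2>(r-1)^2$, i.e.\ to $|r-1|<n-1$. Hence the set $\{s\in(-1,1):g(s)\in(0,n-1)\}$ contains the nonempty open interval $\bigl(-1,\min(s^\ast,1)\bigr)$, which completes the inductive step and therefore the proof.

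The crux of the argument is precisely this range condition $|r-1|<n-1$, which must hold for every admissible $r$: this is where the constraint $r\in(0,n)$ (rather than the narrower $r\in(0,n-1)$) is exactly what is needed. Indeed $r<n$ yields $r-1<n-1$, while $n\geq 3$ yields $1-r<1<n-1$, so $|r-1|<n-1$ throughout $(0,n)$; probabilistically, this reflects the fact that one further unit step can always bridge the remaining distance. The only points left to check — measurability of the integrand and the strict positivity of the weight at the degenerate exponents $\nu-1/2\leq 0$ arising for $d\in\{2,3\}$ — are routine.
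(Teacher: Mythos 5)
Your proof is correct and follows essentially the same route as the paper's: induction on $n$ with Lemma \ref{lem:rho2} as the base case and the recursion \eqref{eq:recursion} for the inductive step, reducing positivity to the nonemptiness of $\{s\in(-1,1):1+2sr+r^2<(n-1)^2\}$, i.e.\ to $(r-1)^2<(n-1)^2$. Your version merely spells out a few details (strict positivity of the weight for $d\in\{2,3\}$, the lower bound $g(s)>0$) that the paper leaves implicit.
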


\begin{proof}
    By induction on $n$, the case $n=2$ follows by definition of $\psi_2^d$ and Lemma \ref{lem:rho2}. Assume now that $\psi_{n-1}^d(r)>0$ for $r\in(0,n-1)$; by (\ref{eq:recursion}) if the open set
    \begin{equation*}
        \{s\in(-1,1) : \psi_{n-1}^d\pa{\sqrt{1+2s r+r^2}}>0\}=\{s\in(-1,1) : 1+2s r+r^2<(n-1)^2\}
    \end{equation*}
    is not empty, then $\psi_{n}^d(r)>0$. Since $s\rightarrow 1+2s r+r^2$ is increasing, this is the case if and only if $1-2 r+r^2=(r-1)^2<(n-1)^2$, which is (only) if $r<n$.
\end{proof}

Moving back to the specific integrals $I^d_q$ we are interested in,
we combine the latter Corollary and the following approximation result on Bessel functions to show that they are both positive and finite.

\begin{lemma}\cite[Theorem 4, Equation 6]{Krasikov2014}\label{lem:krasikov}
    Let $d\geq 2$. There exist constants $\phi_d>0$ such that
    \begin{equation}\label{eq:krasikov}
        0<
        \sup_{r\geq 0} r^{3/2}\abs{J_\nu(r)-\sqrt{\frac2{\pi r}}\cos\pa{r-\phi_d}}
        <\infty.
    \end{equation}
    In particular, by \eqref{eq:jd} and since $j_d(0)=1$,
    \begin{equation}\label{eq:krasikov2}
        j_d(r)=C_d (r\vee 1)^{-(d-1)/2} \cos(r-\phi_d)+O_d(r^{1-d/2}),
        \quad r\geq 0.
    \end{equation}
\end{lemma}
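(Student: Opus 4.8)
The bound \eqref{eq:krasikov} is the uniform approximation to Bessel functions established in \cite[Theorem 4]{Krasikov2014}, which I take as given; the strict positivity of the supremum is automatic, the remainder $r\mapsto J_\nu(r)-\sqrt{2/(\pi r)}\cos(r-\phi_d)$ not being identically zero. The content to be verified is therefore the passage from \eqref{eq:krasikov} to the representation \eqref{eq:krasikov2} for $j_d$, and the plan is to substitute \eqref{eq:krasikov} into the explicit relation between $j_d$ and $J_\nu$ and then package the resulting estimate uniformly in $r\geq 0$.

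First I would write, using \eqref{eq:jd},
\begin{equation*}
    j_d(r)=\nu!\,2^\nu r^{-\nu}J_\nu(r),\qquad \nu=\tfrac d2-1,
\end{equation*}
and insert the decomposition $J_\nu(r)=\sqrt{2/(\pi r)}\cos(r-\phi_d)+R(r)$, where by \eqref{eq:krasikov} the remainder satisfies $|R(r)|\leq C_d\, r^{-3/2}$ for all $r>0$. The principal term then becomes $\nu!\,2^\nu\sqrt{2/\pi}\,r^{-\nu-1/2}\cos(r-\phi_d)$, and since $\nu+\tfrac12=\tfrac{d-1}2$ this is exactly $C_d\,r^{-(d-1)/2}\cos(r-\phi_d)$ with the \emph{positive} constant $C_d=\nu!\,2^\nu\sqrt{2/\pi}$, while the remainder contributes $\nu!\,2^\nu r^{-\nu}R(r)=O_d(r^{-\nu-3/2})=O_d(r^{-(d+1)/2})$.

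It remains to recast this as the single uniform bound \eqref{eq:krasikov2}, for which I would split $[0,\infty)$ into $r\geq 1$ and $0<r\leq 1$. On $\{r\geq1\}$ one has $r\vee1=r$, so the principal term above is literally $C_d(r\vee1)^{-(d-1)/2}\cos(r-\phi_d)$, and the error $O_d(r^{-(d+1)/2})$ is dominated by $O_d(r^{1-d/2})$ because $-(d+1)/2\leq1-\tfrac d2$. On $\{0<r\leq1\}$ one has $r\vee1=1$, so the claimed principal term reduces to the bounded smooth function $C_d\cos(r-\phi_d)$; since $j_d$ is smooth on $[0,1]$ with $j_d(0)=1$, in particular bounded there, the difference $j_d(r)-C_d\cos(r-\phi_d)$ is $O_d(1)$, and as $r^{1-d/2}\geq1$ on $(0,1]$ for every $d\geq2$ this is again $O_d(r^{1-d/2})$. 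Combining the two regimes yields \eqref{eq:krasikov2} on all of $[0,\infty)$.

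The argument carries no genuine analytic difficulty, the sharp oscillatory asymptotics being imported from \cite{Krasikov2014}; the only point requiring care is the uniform packaging across the two regimes. Specifically, the deliberately lossy exponent $1-\tfrac d2$ is what allows a single power of $r$ to dominate simultaneously the true large-$r$ decay rate $r^{-(d+1)/2}$ of the remainder and the merely bounded behaviour of $j_d$ near the origin, the $(r\vee1)$ truncation absorbing the fact that the leading power $r^{-(d-1)/2}$ blows up as $r\to0^+$ when $d\geq3$ whereas $j_d$ stays bounded. This is the step I would be most careful about.
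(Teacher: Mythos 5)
Your proposal is correct and follows exactly the route the paper intends: \eqref{eq:krasikov} is imported verbatim from \cite[Theorem 4]{Krasikov2014}, and \eqref{eq:krasikov2} is obtained by substituting the decomposition of $J_\nu$ into $j_d(r)=\nu!\,2^\nu r^{-\nu}J_\nu(r)$, using $\nu+\tfrac12=\tfrac{d-1}{2}$ for the main term and absorbing both the $O_d(r^{-(d+1)/2})$ tail for $r\geq1$ and the boundedness of $j_d$ near $0$ into the deliberately lossy $O_d(r^{1-d/2})$. The paper offers no more detail than the phrase ``by \eqref{eq:jd} and since $j_d(0)=1$'', and your two-regime packaging is precisely the verification that phrase is standing in for.
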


\begin{lemma}\label{lem:cdq}
    For any $d\geq 2,q\geq 3$ except the single case $d=2,q=4$,
    $I^d_q\in (0,\infty)$ is positive and finite.
\end{lemma}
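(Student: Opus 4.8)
The plan is to deduce both claims from the identity in \eqref{eq:cdq}, namely that $I^d_q$ equals a fixed positive multiple of $\rho^d_{q-1}(1)=\psi^d_{q-1}(1)$, handling positivity and finiteness by separate arguments and splitting according to $q=3$ versus $q\ge 4$. Positivity is the soft part and holds in every case: since $q\ge 3$ we have $q-1\ge 2$ and $1\in(0,q-1)$, so for $q\ge 4$ Corollary \ref{cor:positivi} gives $\psi^d_{q-1}(1)>0$ (a priori possibly $+\infty$), while for $q=3$ Lemma \ref{lem:rho2} produces the explicit value $\rho^d_2(1)=\tfrac{2}{\pi\binom{2\nu}{\nu}}3^{\nu-1/2}>0$. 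Hence $I^d_q>0$ throughout.

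For finiteness I would argue as follows. When $q=3$ the same closed form from Lemma \ref{lem:rho2} already shows $\rho^d_2(1)<\infty$, so $I^d_3<\infty$ for every $d\ge 2$ with no further work. When $q\ge 4$ I would instead estimate the integral $I^d_q=\int_0^\infty j_d(t)^q t^{d-1}\,dt$ directly, splitting at $t=1$. The piece over $(0,1)$ is harmless because $j_d$ is continuous with $j_d(0)=1$, so the integrand is $O(t^{d-1})$. For the tail I would invoke the Bessel bound of Lemma \ref{lem:krasikov}, which gives $\abs{j_d(t)}\le C_d\,t^{-(d-1)/2}$ for $t\ge 1$, whence
\begin{equation*}
    \abs{j_d(t)^q\,t^{d-1}}\le C_d\,t^{-(d-1)(q-2)/2},\qquad t\ge 1.
\end{equation*}
This is absolutely integrable exactly when $(d-1)(q-2)>2$, and a one-line check shows that among $q\ge 4$ the only failure is $d=2,q=4$: indeed $q-2\ge 2$ forces $d-1\le 1$, i.e. $d=2$, and then $q=4$. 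Thus $I^d_q<\infty$ for all $q\ge 4$ with $(d,q)\ne(2,4)$, and together with the positivity above we conclude $I^d_q\in(0,\infty)$ in all the asserted cases.

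I expect the only real subtlety to be conceptual: finiteness is governed entirely by the threshold $(d-1)(q-2)=2$, and the single excluded pair $(d,q)=(2,4)$ is precisely the threshold case with an \emph{even} exponent. There $j_2(t)^4\,t\sim C\,t^{-1}\cos^4(t-\phi_2)$, whose oscillatory factor has strictly positive mean, forcing the logarithmic divergence $I^2_4=+\infty$ (matching the extra $\log\lambda$ in Theorem \ref{thm:main}). The remaining threshold pair $(d,q)=(3,3)$ is saved by the odd exponent making $\cos^q$ mean-zero, but routing all $q=3$ through Lemma \ref{lem:rho2} lets me avoid verifying this conditional convergence by hand.
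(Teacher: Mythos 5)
Your proof is correct, and for the borderline cases it takes a genuinely different route from the paper. The two arguments coincide on positivity (via the identity $I^d_q=\text{const}\cdot\rho^d_{q-1}(1)$ together with Corollary \ref{cor:positivi}, with you being slightly more careful in routing $q=3$ through Lemma \ref{lem:rho2} since the Corollary as stated only covers $n\geq 3$) and on finiteness for $q\geq 4$ (the absolute-convergence criterion $(d-1)(q-2)>2$ from Krasikov's bound, whose only failure among $q\geq 4$ is indeed $(d,q)=(2,4)$). The divergence is at $q=3$: absolute convergence fails there for $d=2,3$, and the paper handles these two pairs by proving \emph{conditional} convergence of the oscillatory integrals directly --- $\int_0^\infty \cos^3(t)\,t^{-1/2}\,dt$ for $d=2$ and Dirichlet's integral $\int_0^\infty t^{-1}\sin^3(t)\,dt=\pi/4$ for $d=3$ --- whereas you read off finiteness of all $I^d_3$ at once from the closed form $\rho^d_2(1)=\tfrac{2}{\pi\binom{2\nu}{\nu}}3^{\nu-1/2}$ via Kluyver's identity at $r=1$. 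Your route is more uniform in $d$, avoids oscillatory estimates entirely, and even recovers the exact value of $I^d_3$ quoted in the paper's subsequent remark; its only cost is that it leans on the full strength of Lemma \ref{lem:kluyver} (the dichotomy stated after it: the improper integral either converges to the density value or diverges to $+\infty$, so a finite density forces convergence of the improper integral), which the paper only needs in the weaker ``positivity'' direction. Since the paper explicitly supplies that dichotomy, this is a legitimate use and not a gap.
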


\begin{proof}
    By Corollary \ref{cor:positivi}, for all $d\geq 2,q\geq 3$ it holds
    \begin{equation*}
        I^d_n=\frac{\rho^d_{n-1}(1)}{(\nu!)^2 4^\nu}=\frac{\psi^d_{n-1}(1)}{(\nu!)^2 4^\nu}>0.
    \end{equation*}
    Concerning finiteness, by \eqref{eq:krasikov2} and \eqref{eq:cdq} we have
    \begin{equation*}
        I^d_q\leq C_{d,q} \int_1^\infty r^{(d-1)(1-q/2)}dr;
    \end{equation*}
    in particular, the integral in \eqref{eq:cdq} is absolutely convergent if $q>\frac{d}{d-1}$. We are left to discuss the cases $d=2,3,q=3$, since we are excluding $d=2,q=4$ (see Remark \ref{rmk:singolarita} below). 
    For $d=2$, by \eqref{eq:krasikov},
    \begin{equation*}
        I^3_2=\int_0^\infty J_0(t)^3tdt=\sqrt{\frac{2}{\pi}}\int_0^\infty \frac{\cos(t)^3}{\sqrt{t}}dt+O(1),
    \end{equation*}
    where, on the right-hand side, the integral converges to a positive number (as it can be shown with elementary passages) and the $O(1)$ accounts for remainder terms involving convergent integrals.
    Finally, $I^3_3=\int_0^\infty t^{-1}\sin(t)^3 dt =\pi/4$
    is easily computed reducing it to Dirichlet's integral.
\end{proof}

\begin{remark}
    The following exact expressions
    \begin{equation*}
        I^d_3=\frac{2}{\pi\sqrt 3}\cdot \frac{12^\nu (\nu!)^4}{(2\nu)!},\,\,d\geq 2, 
        \qquad I^2_5=\frac{\sqrt 5 \Gamma\pa{\frac{1}{15}}\Gamma\pa{\frac{2}{15}}\Gamma\pa{\frac{4}{15}}\Gamma\pa{\frac{8}{15}}}{40\pi^4},
    \end{equation*}
    can be deduced respectively from \cite[Proposition 4.3]{Borwein2016} and \cite[Theorem 5.1]{Borwein2012} (concerning the asymptotics of $\rho^d_n(r)$ as $r\to 0$).
    To the best of our knowledge, no representation is available in the general case.
\end{remark}

\begin{remark}\label{rmk:singolarita}
    In the case $d=2,q=4$ excluded from Lemma \ref{lem:cdq}, one can prove by means of Lemma \ref{lem:krasikov} that $I^2_4=\int_0^\infty J_0(t)^4 tdt=+\infty$,
    coherently with the fact that the density $\rho^3_2(1)=I^4_2=+\infty$
    is not finite.
    This is actually the only case of a density $\rho^d_n$, $d,n\geq 2$, taking value $+\infty$ at some point, as for larger $n$ the regularizing effect of the multiple convolution defining the law of Pearson's random walk prevails (\emph{cf.} again \cite{Borwein2016}).
\end{remark}

%%%%%%%%%%%%%%%%%%%%%%%%%%%%%%%%%%%%%%%%%%%%%
\section{Fluctuation Asymptotics and Integration Domains}\label{sec:polyspectra}

%------------------------------------
\subsection{Variance of Euclidean Polyspectra} \label{ssec:varianceeuclidean}
We first rewrite the variances we are interested in as integrals of a single variable.

\begin{lemma}\label{lem:changeofvareucl}
    Let $R>0$, $B_R=B^E(x_0,R)\subseteq \R^d$ a ball of radius $R$ centered at a (fixed, arbitrary) point $x_0\in \R^d$, and $f\in L^1_{loc}(\R)$; it holds
    \begin{equation}\label{eq:pesoeucl}
        \int_{B_R}\int_{B_R}f(|x-y|)dxdy
        =\int_0^{2R} f(r) W_{d,R}(r) r^{d-1}dr,
    \end{equation}
    where
    \begin{equation}
        W_{d,R}(r)=\omega_{d-1} |B(x,R)\cap B(y,R)|, \quad |x-y|=r,
    \end{equation}
    whose value does not depend on the choice of points $x,y\in \R^d$.
\end{lemma}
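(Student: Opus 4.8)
The plan is to reduce the double integral to a convolution-type single integral by means of the substitution $z=x-y$, and to recognize the resulting inner integral as the volume of the intersection of two balls. First I would use translation invariance of Lebesgue measure to assume $x_0=0$, so that $B_R=B^E(0,R)$. Rewriting the left-hand side over $\R^d\times\R^d$ with indicator functions, $\int_{\R^d}\int_{\R^d}\chi_{B_R}(x)\chi_{B_R}(y)f(\abs{x-y})\,dx\,dy$, I would perform the unimodular change of variables $(x,y)\mapsto(z,y)$ with $z=x-y$ (Jacobian $1$) and apply Fubini to integrate out $y$, obtaining
\[
    \int_{\R^d} f(\abs z)\pa{\int_{\R^d}\chi_{B_R}(y+z)\,\chi_{B_R}(y)\,dy}\,dz .
\]

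Next I would identify the inner integral as $\abs{B_R\cap(B_R-z)}$, the volume of the intersection of $B_R$ with its translate by $-z$, i.e. of two balls of radius $R$ whose centers lie at distance $\abs z$ apart. Since Lebesgue measure is invariant under rigid motions, and a pair of equal-radius balls is determined up to such a motion by the distance between their centers, this volume depends only on $\abs z$; writing $g(r)=\abs{B(x,R)\cap B(y,R)}$ whenever $\abs{x-y}=r$, this is precisely the well-definedness of $W_{d,R}$ asserted in the statement. Moreover $g(r)=0$ for $r\ge 2R$, since two balls of radius $R$ are disjoint once their centers are at distance $\ge 2R$. Passing to polar coordinates $z=r\theta$, $dz=r^{d-1}\,dr\,d\sigma_{d-1}(\theta)$, and integrating the $\theta$-independent integrand over $S^{d-1}$ produces the factor $\omega_{d-1}=\sigma_{d-1}(S^{d-1})$, yielding $\int_0^{2R} f(r)\,\omega_{d-1}g(r)\,r^{d-1}\,dr$. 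Recognizing $\omega_{d-1}g(r)=W_{d,R}(r)$ then gives the claimed identity.

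Finally, to justify the interchange of integration, I would first run the entire computation with $\abs f$ in place of $f$ via Tonelli, all integrands being nonnegative: since $W_{d,R}(r)\,r^{d-1}$ is bounded on $[0,2R]$ (the intersection volume never exceeds $\abs{B_R}$ and $r^{d-1}\le(2R)^{d-1}$ there), the resulting single integral is at most $C_{d,R}\int_0^{2R}\abs{f(r)}\,dr<\infty$ by local integrability of $f$. Hence the double integrand is absolutely integrable on $B_R\times B_R$ and all the manipulations are legitimate, the same chain of equalities then holding for $f$ itself. I do not anticipate a genuine obstacle here: the computation is essentially a geometric change of variables. The only points deserving care are the rigid-motion invariance that makes $W_{d,R}$ a bona fide function of $r$ alone, and the bookkeeping of the factor $\omega_{d-1}$ that the definition of $W_{d,R}$ absorbs from the polar-coordinate surface measure.
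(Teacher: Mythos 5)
Your proof is correct and follows essentially the same route as the paper: the substitution $z=x-y$, identification of the inner integral as the volume of the intersection of two balls with centers at distance $|z|$, and passage to polar coordinates. The only difference is that you spell out the Tonelli justification and the rigid-motion invariance, which the paper leaves implicit.
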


The proof is a consequence of the fact that translations on $\R^d$ are isometries of the whole space: it holds
\begin{equation*}
    \int_{\R^d}\int_{\R^d} \chi_{B_R}(x)\chi_{B_R}(y)f(|x-y|)dxdy
    =\int_{\R^d}\int_{\R^d} \chi_{B_R}(x)\chi_{B_R+z}(x)f(|z|)dxdz,
\end{equation*}
from which \eqref{eq:pesoeucl} follows moving to polar coordinates for $z$.
It is easy to observe that $W_{d,R}:[0,\infty)\to[0,\infty)$ is a differentiable function supported by $[0,2R]$, and $W_{d,R}=\omega_{d-1}|B_R|$, $W_{d,R}(2R)=0$.

%The Euclidean part of \cref{thm:main} can now be established by combining the latter observations, the content of the previous section and the approximation result for Bessel functions in \cref{lem:krasikov}.

\begin{proof}[Proof of Theorem \ref{thm:main}, Euclidean part]
    In sight of Lemma \ref{lem:changeofvareucl} it holds,
    \begin{align}\label{eq:vareucl}
        V^{E}_{d,R}(q,\lambda)
        =q! \int_{B_R}\int_{B_R}j_d( \lambda |x-y|)^qdxdy
        =q! \int_0^{2R} j_d(\lambda r)^q W_{d,R}(r) r^{d-1}dr.
    \end{align}
    For $q=2$, \eqref{eq:krasikov2} implies that, as $\lambda\to\infty$,
    \begin{align*}
        V^{E}_{d,R}(2,\lambda)
        =C_d \int_0^{2R} \frac{\cos(\lambda r-\phi_d)^2}{\lambda^{d-1}}W_{d,R}(r)dr+ O_{d,R}\pa{\frac1{\lambda^d}}
        =C_{d,R} \frac{1+o_{d,R}(1)}{\lambda^{d-1}},
    \end{align*}
    the second step following by expanding $2\cos(\theta)^2=1+\cos(2\theta)$
    and applying Riemann-Lebesgue lemma to oscillatory terms.

    We also treat separately the case $d=2,q=4$, since it involves a logarithmic discrepancy with the general case $q\geq 3$. \eqref{eq:krasikov2} implies that, as $\lambda\to\infty$,
    \begin{align*}
        V^{E}_{2,R}(4,\lambda)
        =C_d \int_0^{2R} \frac{\cos(\lambda r-\phi_d)^4}{\lambda^2 r}W_{d,R}(r)dr+ O_{R}\pa{\frac1{\lambda^3}}
        =C_{R} \frac{\log(\lambda)+o_{R}(1)}{\lambda^2},
    \end{align*}
    where, as in the case $q=2$, the last equality follows by $8\cos(\theta)^4=3+4\cos(2\theta)+\cos(4\theta)$ and Riemann-Lebesgue lemma.

    Finally, we consider the general case $q\geq 3$, excluding $d=2,q=4$.
    By \eqref{eq:vareucl},
    \begin{multline*}
        V^{E}_{d,R}(q,\lambda)
        =q! \int_0^{2 R} j_d(\lambda r)^q r^{d-1}
        \pa{\int_r^{2R} -W'_{d,R}(s)ds}dr\\
        =-q! \int_0^{2 R} W'_{d,R}(s)\pa{\int_0^{s}j_d(\lambda r)^q r^{d-1}dr}ds\\
        =\frac{q!}{\lambda^d} \int_0^{2 R} -W'_{d,R}(s)
        \pa{\int_0^{s\lambda}j_d( r)^q r^{d-1}dr}ds,
    \end{multline*}
    integrating by parts in the second step. Since $|W'_{d,R}(r)|$ is bounded
    and supported by $[0,2R]$, the thesis follows by dominated convergence and Lemma \ref{lem:cdq} (implying convergence of the inner integral on the right-hand side as $\lambda\to\infty$).
\end{proof}
%------------------------------------
\subsection{Variance of Hyperspherical Polyspectra}
The forthcoming computations are in fact close analogs of the ones in the previous paragraph, only in a perhaps less familiar geometrical setting. The change of variables of Lemma \ref{lem:changeofvareucl} takes on $S^d$ the following form, with flat translations of $\R^d$ being replaced by isometries of $S^d$.

\begin{lemma}\label{lem:changeofvarsphere}
    Let $R\in [0,\pi]$, $B_R=B^S(x_0,R)\subseteq S^d$ a ball of radius $R$ centered at a (fixed, arbitrary) point $x_0\in S^d$, and $f\in L^1(\R)$; it holds
    \begin{equation}
        \int_{B_R}\int_{B_R}f(d(x,y))d\sigma_d(x)d\sigma_d(y)
        =\int_0^{\pi} f(r) \tilde W_{d,R}(r) \sin(r)^{d-1}dr,
    \end{equation}
    where
    \begin{equation}\label{eq:defWS}
        \tilde W_{d,R}(r)=\omega_{d-1} \sigma_d(B(x,R)\cap B(y,R)), \quad d(x,y)=r,
    \end{equation}
    whose value does not depend on the choice of points $x,y\in S^d$.
\end{lemma}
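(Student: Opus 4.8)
The plan is to mirror the proof of Lemma~\ref{lem:changeofvareucl}, replacing the group of translations of $\R^d$ by the full isometry group $G=O(d+1)$ of $S^d$, and Cartesian slicing by geodesic polar coordinates. Throughout I use that $\sigma_d$ is $G$-invariant, that each $g\in G$ satisfies $g(\exp_x(v))=\exp_{gx}(dg_x v)$, and that $G$ acts transitively both on $S^d$ and on its unit tangent bundle $US^d$ --- equivalently, on ordered pairs of points at a fixed geodesic distance. This last transitivity already shows $\tilde W_{d,R}(r)$ is well defined: if $d(u,v)=d(u',v')=r$ there is $g\in G$ with $gu=u'$, $gv=v'$, whence $\sigma_d(B(u',R)\cap B(v',R))=\sigma_d(g(B(u,R)\cap B(v,R)))=\sigma_d(B(u,R)\cap B(v,R))$ by invariance.

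First I would fix $x$ and pass to geodesic polar coordinates around it for the inner integration in $y$: writing $y=\exp_x(r\theta)$ with $r=d(x,y)\in[0,\pi]$ and $\theta\in S^{d-1}\subset T_xS^d$, one has $d\sigma_d(y)=\sin(r)^{d-1}\,dr\,d\sigma_{d-1}(\theta)$. Since $f\in L^1$ and the indicators are bounded, Tonelli (applied to $\abs f$, to get finiteness) followed by Fubini justifies exchanging the order of integration, and the double integral becomes
\begin{equation*}
    \int_0^{\pi} f(r)\,\sin(r)^{d-1}\,G(r)\,dr,\qquad G(r):=\int_{B_R}\int_{S^{d-1}}\chi_{B_R}\!\pa{\exp_x(r\theta)}\,d\sigma_{d-1}(\theta)\,d\sigma_d(x).
\end{equation*}
Comparing with the claimed formula, everything reduces to the geometric identity $G(r)=\tilde W_{d,R}(r)$.

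To establish it I would read $G(r)$ as an integral over the unit tangent bundle: the measure $d\sigma_d(x)\,d\sigma_{d-1}(\theta)$ is the $G$-invariant measure on $US^d$, of total mass $\omega_d\omega_{d-1}$, so up to that normalization it coincides with the pushforward of the normalized Haar measure $dg$ on $G$ under $g\mapsto(gp_0,dg_{p_0}\theta_0)$ for a fixed base vector $(p_0,\theta_0)$, by uniqueness of the invariant probability measure on the transitive orbit. Setting $q_0:=\exp_{p_0}(r\theta_0)$, so $d(p_0,q_0)=r$, and using $g\exp_{p_0}(r\theta_0)=\exp_{gp_0}(r\,dg_{p_0}\theta_0)$, this turns $G(r)$ into $\omega_d\omega_{d-1}\int_G\chi_{B_R}(gp_0)\chi_{B_R}(gq_0)\,dg$. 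Finally, rewriting $gp_0\in B(x_0,R)$ as $g^{-1}x_0\in B(p_0,R)$ and likewise for $q_0$, and using that $g\mapsto g^{-1}x_0$ pushes $dg$ forward to $\sigma_d/\omega_d$, I obtain $G(r)=\omega_{d-1}\sigma_d(B(p_0,R)\cap B(q_0,R))=\tilde W_{d,R}(r)$.

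The main obstacle is precisely this last identity. In the Euclidean case the inner integral $\int \chi_{B_R}(x)\chi_{B_R}(x+r\theta)\,dx$ is already independent of the direction $\theta$ by translation invariance, so integrating over $\theta$ merely contributes the factor $\omega_{d-1}$; on $S^d$ there is no global parallelism, so $\theta$ cannot be chosen independently of $x$, and one genuinely has to average over the unit tangent bundle and exploit the homogeneity of the sphere (transitivity of $O(d+1)$ on $US^d$) to collapse $G(r)$ to the intersection volume. The remaining points --- the polar-coordinate Jacobian $\sin(r)^{d-1}$, the Fubini justification, and the bookkeeping of the invariant measures and their total masses --- are routine.
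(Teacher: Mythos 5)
Your proof is correct, and it reaches the same structural endpoint as the paper's (reduce the double integral to $\int_0^\pi f(r)\tilde W_{d,R}(r)\sin(r)^{d-1}dr$ by exploiting the transitivity of the isometry group), but the key step is implemented differently. The paper fixes for each $x$ an isometry $I_x$ with $I_xx_0=x$, substitutes $y=I_xz$ in the inner integral so that $d(x,y)=d(x_0,z)$, swaps the order of integration, and then identifies the remaining $x$-integral $\int_{S^d}\chi\set{x\in B_R}\chi\set{I_xz\in B_R}d\sigma_d(x)$ with the intersection volume via a further change of variables $w=I_x^{-1}x_0$; this requires an (implicit) measurable, measure-compatible choice of the section $x\mapsto I_x$. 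You instead pass to geodesic polar coordinates first and recognize the angular average $G(r)$ as an integral over the unit tangent bundle against its unique $O(d+1)$-invariant measure of mass $\omega_d\omega_{d-1}$, then unfold it as a Haar integral over the group and push forward by $g\mapsto g^{-1}x_0$ to land on $\omega_{d-1}\sigma_d(B(p_0,R)\cap B(q_0,R))$. Your route is somewhat cleaner in that it avoids choosing a family of isometries and makes the invariance argument rest on the standard uniqueness of the invariant probability measure on a homogeneous space; the paper's version is more hands-on and closer in form to its Euclidean counterpart (Lemma \ref{lem:changeofvareucl}). You also correctly isolate the genuine difference from the Euclidean case, namely that the direction $\theta$ lives in $T_xS^d$ and cannot be decoupled from $x$, which is exactly why the average over the unit tangent bundle (or, in the paper's language, over the choice of $I_x$) is needed.
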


The proof is conceptually analogous to the one of Lemma \ref{lem:changeofvareucl}. First, let us recall that the isometry group of $S^d$ can be identified with that of linear orthogonal transformations of the ambient $R^{d+1}$, and it acts transitively on $S^d$. The volume $\sigma_d$ and the spherical geodesic distance are invariant under isometries, that is $I_\#\sigma_d=\sigma_d$ and $d(a,b)=d(I(a),I(b))$ for any isometry $I:S^d\to S^d$ and points $a,b\in S^d$.

\begin{proof}
We first observe that for any isometry $I:S^d\to S^d$ it holds
\begin{multline*}
    \sigma_d(B(Ix,R)\cap B(Iy,R))
    =\sigma_d \set{z: d(Ix,z),d(Iy,z)\leq R}\\
    =\sigma_d \set{z: d(x,I^{-1}z),d(y,I^{-1}z)\leq R}
    =\sigma_d \set{Iw: d(x,w),d(y,w)\leq R}\\
    =I^{-1}_\#\sigma_d \set{w: d(x,w),d(y,w)\leq R}=\sigma_d(B(x,R)\cap B(y,R)),
\end{multline*}
therefore the right-hand side only depends on $d(x,y)$, it being a function of $x,y$ invariant under isometries. In particular, $W_{d,R}(r)$ in \eqref{eq:defWS} is well-defined.

For any two points $x,y\in S^d$ we can choose isometries $I_x,I_y:S^d\to S^d$ such that $I_xx_0=x$ and $I_yx_0=y$. Letting $z=I_y I_x^{-1}x_0$, by Fubini-Tonelli theorem it holds
\begin{multline*}
    \int_{S^d}\int_{S^d} \chi\set{x\in B_R}\chi\set{y\in B_R}
    f(d(x,y))d\sigma_d(x)d\sigma_d(y)\\
    =\int_{S^d} d\sigma_d(z)  f(d(x_0,z)) \int_{S^d}  d\sigma_d(x) \chi\set{x\in B_R}\chi\set{I_xz\in B_R},
\end{multline*}
where the inner integral can be rewritten as follows: since $I_x^{-1}x_0=I_x^{-2}x$,
\begin{multline*}
    \int_{S^d} \chi\set{x\in B_R}\chi\set{I_xz\in B_R}  d\sigma_d(x) 
    =\sigma_d \set{x: d(x,x_0),d(I_xz,x_0)\leq R}\\
    =I^2_\#\sigma_d \set{x: d(x_0,I_x^{-1}x_0),d(z,I_x^{-1}x_0)\leq R}\\
    =\sigma_d \set{I_x^{-2}x: d(x_0,I_x^{-1}x_0),d(z,I_x^{-1}x_0)\leq R}\\
    =\sigma_d \set{w: d(x_0,w),d(z,w)\leq R},
\end{multline*}
thus in particular it only depends on $d(x_0,z)$, thanks to the argument at the beginning of the proof. Since the volume element of $S^d$ reduces to
\begin{equation*}
    \int_{S^d} h(d(x_0,x))d\sigma_d(x)= \omega_{d-1}\int_0^\pi h(r) \sin(r)^{d-1}dr, \quad h\in L^1([0,\pi]),
\end{equation*}
for radial functions, combining the above equations concludes the proof.  
\end{proof}

The function $\tilde W_{d,R}:[0,\pi]\to [0,\infty)$ is differentiable, and attains its maximum value in $\tilde W_{d,R}(0)= \omega_{d-1} \sigma_d(B(x_0,R))$ and minimum in $\tilde W_{d,R}(\pi)$ (which, unlike the Euclidean $W_{d,R}(2R)$, does not vanish in general).
We can now proceed to complete the proof of Theorem \ref{thm:main} by reducing ourselves to the computations performed in the Euclidean case thanks to the following:

\begin{lemma}[Hilb's Asymptotic Formula] \label{lem:Hilbs}\cite[Theorem 8.21.12]{Szego1939}
    Let $d\geq 2$, $\ell\in\N$.
    Given $a\in (0,\pi)$, for all $\theta\in (0,a)$
    it holds:
    \begin{equation}
        (\sin\theta )^{\nu} \G_{d,\ell}(\cos \theta)
        = \frac{2^\nu}{\binom{\ell+\nu}{\ell}}
        \pa{\frac{\Gamma(\ell+d/2)}{L^\nu\ell!}
        \pa{\frac{\theta}{\sin \theta}}^{1/2} J_\nu(L\theta)+\delta(\theta)},
    \end{equation}
    where $L=\ell+\frac{d-1}{2}$ and $\delta(\theta) \leq C_{d,a} (\chi_{(0,1/\ell)}(\theta)\theta^{\nu+2}\ell^{\nu}+\chi_{(1/\ell,a)}(\theta)\sqrt{\theta}\ell^{-3/2})$.
\end{lemma}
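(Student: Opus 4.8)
The plan is to derive Hilb's formula from the classical Liouville--Green (normal form) reduction of the Jacobi differential equation, exploiting that $\G_{d,\ell}(\cos\theta)=\binom{\ell+\nu}{\ell}^{-1}P^{\nu,\nu}_\ell(\cos\theta)$ is a normalized Jacobi polynomial with equal parameters $\alpha=\beta=\nu$. First I would recall that $y(\theta)=P^{\nu,\nu}_\ell(\cos\theta)$ solves the Jacobi ODE and introduce the Liouville substitution $u(\theta)=\pa{\tfrac12\sin\theta}^{\nu+1/2}y(\theta)$, which is tailored to annihilate the first-order term. A direct (if tedious) computation turns the Jacobi equation into the Schrödinger-type normal form
\begin{equation*}
    u''(\theta)+\pa{L^2+\frac{1/4-\nu^2}{4\sin^2(\theta/2)}+\frac{1/4-\nu^2}{4\cos^2(\theta/2)}}u(\theta)=0,\qquad L=\ell+\tfrac{d-1}{2},
\end{equation*}
where $L=\ell+(2\nu+1)/2$ is exactly the frequency appearing in the statement.

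The second step is a comparison with Bessel's equation. On any $(0,a)$ with $a<\pi$ the $\cos$-term is bounded and, expanding $1/\sin^2(\theta/2)=4/\theta^2+O(1)$, the potential differs from the Bessel potential $L^2+(1/4-\nu^2)/\theta^2$ by a function $g(\theta)$ that stays bounded on $(0,a)$. Since $v(\theta)=\sqrt\theta\,J_\nu(L\theta)$ solves $v''+\pa{L^2+(1/4-\nu^2)/\theta^2}v=0$ exactly, I would treat $g(\theta)u(\theta)$ as a forcing term and convert the equation, via variation of parameters against the Bessel pair $\sqrt\theta J_\nu(L\theta)$, $\sqrt\theta Y_\nu(L\theta)$ (with the obvious logarithmic modification when $\nu=0$), into a Volterra integral equation anchored at $\theta=0$ so as to select the solution regular at the origin. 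The multiplicative constant $c$ of the principal term is then fixed by matching the $\theta\to0$ behavior: using $P^{\nu,\nu}_\ell(1)=\binom{\ell+\nu}{\ell}$ on the polynomial side and $J_\nu(z)\sim (z/2)^\nu/\Gamma(\nu+1)$ on the Bessel side, one reads off $c=2^{-1/2}\Gamma(\ell+d/2)/(L^\nu\ell!)$. Dividing back by $\pa{\tfrac12\sin\theta}^{\nu+1/2}\binom{\ell+\nu}{\ell}$ and collecting the factor $(\theta/\sin\theta)^{1/2}$ reproduces precisely the stated principal term, including the constant $2^\nu\Gamma(\ell+d/2)/(\binom{\ell+\nu}{\ell}L^\nu\ell!)$.

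The main obstacle, and the only genuinely delicate point, is the sharp control of the remainder $\delta(\theta)$ with the correct powers of $\ell$ in the two regimes, as opposed to a merely qualitative $o(1)$. I would estimate the Volterra iteration using the uniform bounds $\abs{J_\nu(z)}\lesssim\min(z^\nu,z^{-1/2})$ and the matching bounds on $Y_\nu$ to control the Green's kernel, together with the boundedness of $g$ to close the estimate. For $\theta<1/\ell$ the argument $L\theta$ is bounded, the Bessel factors are of size $(\ell\theta)^\nu$, and the accumulated error carries an extra $\theta^2$, producing the bound of order $\theta^{\nu+2}\ell^\nu$; for $1/\ell<\theta<a$ one is in the oscillatory region $L\theta\gtrsim1$, where the kernel contributes a factor $\asymp\ell^{-1}$ against the $\sqrt\theta$-weight, yielding $\sqrt\theta\,\ell^{-3/2}$. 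These are exactly the two-regime estimates claimed for $\delta(\theta)$. Tracking these powers of $\ell$ rigorously through the integral equation is where essentially all the work lies, and it is precisely the quantitative content of Szegő's Theorem~8.21.12, which I would invoke for the fully sharp version rather than reprove from scratch.
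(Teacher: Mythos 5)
Your proposal is correct and ends up in the same place as the paper, which offers no independent proof of this lemma but simply cites Szeg\H{o}'s Theorem 8.21.12 (with $\alpha=\beta=\nu$, $N=\ell+(d-1)/2=L$, and the prefactor rewritten via $(\sin\tfrac\theta2\cos\tfrac\theta2)^\nu=2^{-\nu}(\sin\theta)^\nu$ and the normalization $\binom{\ell+\nu}{\ell}^{-1}$). Your Liouville--Bessel comparison sketch, including the normal form, the matching of the constant at $\theta\to0$, and the two-regime error heuristics, is an accurate account of how that theorem is actually proved, but since you ultimately invoke the same reference for the sharp bounds on $\delta(\theta)$, the approach is essentially identical to the paper's.
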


\begin{corollary}\label{cor:daGaJ}
    Let $d\geq 2$, $\ell\in\N$. For all $a\in (0,\pi)$ it holds, as $\ell\to \infty$,
    \begin{equation}
        \int_{0}^{a} \G_{d,\ell}(\cos r)^q (\sin r)^{d-1} \,dr= (1+o(1))\begin{cases}
            C_d \ell^{1-d} & q=2\\
            C \log(\ell)/\ell^2 & q=4, d=2\\
            I^d_q/ \ell^d & \text{all other } d\geq 2, q \geq 3
        \end{cases}\end{equation}
    with $I^d_q$ being as in Lemma \ref{lem:cdq} and $C,C_d>0$ positive constants.
\end{corollary}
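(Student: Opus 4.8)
The plan is to insert Hilb's asymptotic formula (Lemma~\ref{lem:Hilbs}) into the integral and reduce it, after a rescaling, to the Euclidean integral $I^d_q$ of \eqref{eq:cdq}, whose three regimes were already settled in the Euclidean part of the proof of Theorem~\ref{thm:main}. The first step is a pleasantly clean algebraic simplification of the leading term of Hilb's formula. Converting the Bessel function through \eqref{eq:jd}, $J_\nu(s)=s^\nu j_d(s)/(\nu!\,2^\nu)$, and using $d/2=\nu+1$ together with $\binom{\ell+\nu}{\ell}=\Gamma(\ell+\nu+1)/(\nu!\,\ell!)$, one checks that \emph{all} the $\ell$-dependent prefactors cancel exactly, so that
\[
\G_{d,\ell}(\cos r)=M(r)+E_\ell(r),\qquad M(r):=\Big(\tfrac{r}{\sin r}\Big)^{\nu+1/2}j_d(Lr),\quad E_\ell(r):=\frac{2^\nu}{\binom{\ell+\nu}{\ell}(\sin r)^\nu}\,\delta(r),
\]
with $L=\ell+\frac{d-1}{2}$. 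Recalling $d-1=2\nu+1$, the point of this normalization is that $M(r)^q(\sin r)^{d-1}=j_d(Lr)^q\,r^{d-1}\,(r/\sin r)^{(q-2)(d-1)/2}$, i.e. it differs from the Euclidean integrand $j_d(Lr)^q r^{d-1}$ only by a factor tending to $1$ as $r\to0$ and bounded on $(0,a]$ for $a<\pi$.

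Next I would analyze the main term $\int_0^a M(r)^q(\sin r)^{d-1}\,dr$. Substituting $t=Lr$ turns it into $L^{-d}\int_0^{La}j_d(t)^q t^{d-1}g_L(t)\,dt$, where $g_L(t)=(\tfrac{t/L}{\sin(t/L)})^{(q-2)(d-1)/2}\to1$ pointwise and is uniformly bounded for $t\in(0,La)$ since $a<\pi$. For $q\geq3$ outside the case $d=2,q=4$, Lemma~\ref{lem:cdq} gives $\int_0^{La}j_d(t)^q t^{d-1}\,dt\to I^d_q\in(0,\infty)$, and removing the factor $g_L$ in the limit yields $I^d_q/L^d\sim I^d_q/\ell^d$. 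For the divergent regimes $q=2$ and $d=2,q=4$ I would instead feed Krasikov's approximation \eqref{eq:krasikov2} into $M$, obtaining $M(r)\approx C_d L^{-(d-1)/2}(\sin r)^{-(d-1)/2}\cos(Lr-\phi_d)$ and hence $M(r)^q(\sin r)^{d-1}\approx C_d^q L^{-q(d-1)/2}(\sin r)^{(d-1)(1-q/2)}\cos^q(Lr-\phi_d)$; expanding $\cos^2$ (resp.\ $\cos^4$) and applying the Riemann--Lebesgue lemma to the oscillatory terms, exactly as in the Euclidean computation, produces the leading $\tfrac12 C_d^2 a\,\ell^{1-d}$ (resp.\ the logarithmic $\int dr/\sin r\sim\log\ell$ giving $C\log\ell/\ell^2$), the region $r\lesssim1/\ell$ contributing only $O(\ell^{-d})$.

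It remains to control the error. Expanding $\G^q=(M+E_\ell)^q$ binomially, I would bound $E_\ell$ using the two-regime estimate on $\delta$ from Lemma~\ref{lem:Hilbs}: on $(0,1/\ell)$ one gets $|E_\ell(r)|\lesssim r^2\lesssim\ell^{-2}$ while $|M|=O(1)$, and on $(1/\ell,a)$ one gets $|E_\ell(r)|\lesssim\ell^{-\nu-3/2}r^{1/2-\nu}$ against $|M(r)|\lesssim(Lr)^{-(d-1)/2}$, so that $|E_\ell|\lesssim\ell^{-1}|M|$ uniformly there. Each cross term $\int_0^a M^{q-k}E_\ell^{\,k}(\sin r)^{d-1}\,dr$ with $k\geq1$ is then dominated either by $\ell^{-1}\int_{1/\ell}^a|M|^q(\sin r)^{d-1}$ on the outer region or by $\ell^{-2}\int_0^{1/\ell}r^{d-1}$ near the origin; in every case one gains a power of $\ell^{-1}$ (or $\ell^{-2-d}$) over the respective main-term rate, so all cross terms are negligible.

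The main obstacle I anticipate is \emph{not} the absolutely convergent regime but the two borderline cases $d=2,q=3$ and $d=3,q=3$, where $I^d_q$ of \eqref{eq:cdq} is only conditionally convergent. There dominated convergence is unavailable, so replacing $g_L$ by $1$ in $\int_0^{La}j_d(t)^q t^{d-1}g_L(t)\,dt$ and bounding the cross terms both require extra care. I would handle this by integrating by parts against $\tilde H(t)=\int_0^t j_d^q s^{d-1}ds-I^d_q$, which tends to $0$ (at rate $t^{-1/2}$ for $d=2$ and $t^{-1}$ for $d=3$ by the oscillation of $\cos^q$), together with $|g_L'|\lesssim L^{-1}$: the boundary terms vanish because $g_L(0)=1$ and $\tilde H(La)\to0$, and the remaining integral is $O(L^{-1}\int_0^{La}|\tilde H|)=O(\ell^{-1/2})$ or $O(\ell^{-1}\log\ell)$, hence negligible against $\ell^{-d}$. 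The same Abel/second–mean–value bookkeeping confirms that the conditionally convergent partial integrals are uniformly bounded, legitimizing the crude cross-term estimates above even when $\int_0^{La}|j_d|^q t^{d-1}\,dt$ itself grows.
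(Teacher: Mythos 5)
Your proposal is correct and follows essentially the same route as the paper's (sketched) proof: insert Hilb's formula, convert $J_\nu$ to $j_d$ so the integrand becomes $j_d(Lr)^q r^{d-1}(r/\sin r)^{(q-2)(d-1)/2}$ plus a $\delta$-remainder, treat $q=2$ and $d=2,q=4$ via Krasikov's approximation and Riemann--Lebesgue, and handle the remaining cases (including the conditionally convergent ones $d=2,3$, $q=3$) by an integration by parts against the slowly varying weight, exactly as the paper does by reusing the Euclidean computation with $W_{d,R}$ replaced by $\chi_{[0,a]}(r)(r/\sin r)^{(q-2)(d-1)/2}$. The only cosmetic slips are that the bound $|E_\ell|\lesssim \ell^{-1}|M|$ on $(1/\ell,a)$ should be stated against the envelope $(Lr)^{-(d-1)/2}$ rather than $|M|$ itself (which vanishes at the zeros of the cosine), and that in the Abel-summation step the boundary term at $0$ does not vanish but is precisely what produces $I^d_q$; neither affects the argument.
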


\begin{remark}\label{rmk:pi}
    The limit case $a=\pi$ can be reduced to the one with $a=\pi/2$ splitting the integration domain in half and exploiting the following symmetry property: for all $d\geq 2$,
    \begin{equation*}
        \G_{d,\ell}(t)=(-1)^\ell \G_{d,\ell}(-t), \quad t\in\R,\ell\in\N.
    \end{equation*}
    In particular, the integral in Corollary \ref{cor:daGaJ} identically vanishes if $a=\pi$ and both $q,\ell$ are odd. If this is not the case, then the thesis of Corollary \ref{cor:daGaJ} holds with an additional factor 2 on the right-hand side.
\end{remark}

Corollary \ref{cor:daGaJ} actually collects computations already appeared in \cite{Wigman2010,Marinucci_2011, Marinucci2015}. Because of this and the similarity with computations in the previous paragraph, we only present a sketch of the proof.
\begin{proof}
We first observe that as $\ell\to\infty$,
\begin{equation}\label{eq:gamma}
    \frac{\Gamma(\ell+d/2)}{L^\nu\ell!} =1+o(1),\quad 
    \binom{\ell+\nu}{\ell} =\frac{\ell^\nu}{\nu!}(1+o(1)).
\end{equation}
When $q=2$, Lemma \ref{lem:Hilbs} and \eqref{eq:gamma} imply
\begin{multline*}
    \int_{0}^{a}  \G_{d,\ell}(\cos r)^2 (\sin r)^{d-1} dr=
    \frac{4^\nu (\nu!)^2}{\ell^{2\nu}}(1+o(1))\int_{0}^a J_\nu(Lr) ^2  
    r dr\\
    + O\pa{\frac{1}{\ell^{d-2} }\int_{0}^{a} |J_\nu(Lr)| \delta(r) \sin(r)dr}.
\end{multline*}
The thesis now follows by applying Lemma \ref{lem:krasikov} in the same fashion of the Euclidean case (the remainder term on the right-hand side is proved to be $O(\ell^{-d})$).
The analysis of the case $d=2,q=4$ is similar, we refer again to \cite{Wigman2010} for full details.

As for the remaining cases, by Lemma \ref{lem:Hilbs}, \eqref{eq:gamma} and the definition of $j_d$ in \eqref{eq:jd},
\begin{multline*}
    \int_{0}^{a} \G_{d,\ell}(\cos r)^q (\sin r)^{d-1} dr
   =(1+o(1)) 
   \int_{0}^a \pa{\frac{r}{\sin r}}^{q\nu+q/2-(d-1)} j_d(Lr)^q  
   r^{d-1} dr\\
   + O\left( \frac{1}{\ell^{q\nu}} \int_{0}^{a} (\sin r)^{-q\nu} |J_\nu(Lr)|^{q-1} \delta(r) (\sin r)^{d-1}dr \right) .
\end{multline*}
The remainder term can be shown to be $O(\ell^{-(d+2)\wedge (q(d/2-1/2)-1)})=o(\ell^{-d})$ by means of Lemma \ref{lem:krasikov} (changing variables and splitting the integration domain similarly to the previous case).
The thesis now follows if
\begin{equation*}
    \int_{0}^a \pa{\frac{r}{\sin r}}^{q\nu+q/2-(d-1)} j_d(Lr)^q  
   r^{d-1} dr=(1+o(1))\int_{0}^\infty j_d(r)^q r^{d-1} dr, \quad \ell\to\infty,
\end{equation*}
which can be proved with the same computation exploiting integration by parts we used at the end of the proof of Theorem \ref{thm:main}, Euclidean case,
that is replacing $W_{d,R}$ of that computation with $\chi_{[0,a]}(r)(r/\sin r)^{q\nu+q/2-(d-1)}$.
\end{proof}

\begin{proof}[Proof of Theorem \ref{thm:main}, Hyperspherical part] Let $R\in (0,\pi)$ (the case $R=\pi$ is dealt with similarly, by Remark \ref{rmk:pi}).
    By Lemma \ref{lem:changeofvarsphere},
    \begin{multline*}
        V^{S}_{d,R}(q,\ell)
			=\var\pa{\int_{B_R}H_q(T_{\ell}(x))d\sigma_d(x)}\\
            = q! \int_{B_R}\int_{B_R}\G_{d,\ell}(x,y)^q\,d\sigma_d(x)\,d\sigma_d(y)
            = q! \int_{0}^\pi  \G_{d,\ell}(\cos r)^q \sin(r)^{d-1} \tilde W_{d,R}(r)dr.
    \end{multline*}
    Integrating by parts as in Section \ref{ssec:varianceeuclidean} we rewrite:
    \begin{multline*}
        \int_{0}^\pi \G_{d,\ell}(\cos r)^q (\sin r)^{d-1} \tilde W_{d,R}(r)dr
        =\tilde W_{d,R}(\pi) \int_0^\pi \G_{d,\ell}(\cos r)^q (\sin r)^{d-1} dr\\
        -\int_0^\pi \pa{\int_0^r \G_{d,\ell}(\cos s)^q (\sin s)^{d-1} ds} \tilde W_{d,R}'(r)dr.
    \end{multline*}
    The thesis now follows from Corollary \ref{cor:daGaJ}. 
    We observe in particular that when we are not in the cases $q=2$ nor in the one $d=2,q=4$, by Corollary \ref{cor:daGaJ} and dominated convergence we have
    \begin{multline*}
        \ell^d \int_{0}^\pi \G_{d,\ell}(\cos r)^q (\sin r)^{d-1} \tilde W_{d,R}(r)dr\\
        \xrightarrow{\ell\to\infty}
        I^d_q \tilde W_{d,R}(\pi) -I^d_q \int_0^\pi \tilde W_{d,R}(r)'dr
        =I^d_q \tilde W_{d,R}(0).\qedhere
    \end{multline*}
\end{proof}

\begin{acknowledgements}
F. G. and A. P. T. acknowledge support of INdAM through the INdAM-GNAMPA Project CUP\_E55F22000270001. L.M. acknowledges support of the Luxembourg National Research Fund PRIDE17/1224660/GPS. F. G. wishes to thank the University of Luxembourg for the warm hospitality during part of the preparation of the present work.
\end{acknowledgements}

\bibliographystyle{plain}

\end{document}